\documentclass[preprint,12pt]{elsarticle}
\usepackage{amsmath}
\usepackage{float}
\usepackage{color,soul}
\usepackage{amsfonts}
\usepackage{amssymb}
\newtheorem{theorem}{Theorem}
\newtheorem{lemma}{Lemma}
\newtheorem{definition}{Definition}
\newtheorem{proposition}{Proposition}
\newtheorem{example}{Example}

\newtheorem{corollary}{Corollary}
\newtheorem{remark}{Remark}
\newtheorem{proof}{Proof}
\usepackage[left=2cm,right=2cm,top=2cm,bottom=2cm]{geometry}
\usepackage{hyperref}
\hypersetup{colorlinks,linkcolor={blue},citecolor={blue},urlcolor={red}} 
\usepackage[format=hang]{caption}
\usepackage{xcolor}

\newcommand{\R}{\mathbb {R}}
\newcommand{\N}{\mathbb {N}}

\newcommand{\C}{\mathbb {C}}

\usepackage{ae}
\usepackage{graphicx} 
\usepackage{booktabs} 
\usepackage[anythingbreaks]{breakurl}

\begin{document}

\begin{frontmatter}

\title{Chaotic Dynamics of Conformable Semigroups via Classical Theory}

\author[label1]{Mohamed Khoulane}
\author[label1]{Aziz El Ghazouani}
\author[label1]{M'hamed Elomari}
\affiliation[label1]{organization={Laboratory of Applied Mathematics and Scientific Computing, Sultan Moulay Slimane University},
            city={Beni Mellal},
            postcode={23000}, 
            country={Morocco}}

\begin{abstract} Conformable derivatives involve a fractional parameter while preserving locality: on smooth functions they reduce to a classical derivative multiplied by an explicit weight. Exploiting this structural feature, we show that conformable time evolution does not give rise to a genuinely new semigroup theory. Rather, it can be fully interpreted as a classical $C_0$--semigroup observed through a nonlinear change of time. For $\delta\in(0,1]$, we introduce the conformable clock
\[
\Psi(t)=\frac{t^\delta}{\delta},
\]
and prove that every $C_0$--$\delta$--semigroup $\mathcal S_\delta$ admits the representation
\[
\mathcal S_\delta(t)=\mathcal T(\Psi(t)),
\]
where $\mathcal T$ is a uniquely determined classical $C_0$--semigroup on the same state space. This correspondence is exact at the infinitesimal level: the $\delta$--generator of $\mathcal S_\delta$ coincides with the generator of $\mathcal T$ on a common domain, and conformable mild solutions are in one-to-one correspondence with classical mild solutions under the reparametrization $s=\Psi(t)$. In particular, orbit sets are unchanged by the conformable clock, so orbit-based linear dynamical properties are invariant; $\delta$--hypercyclicity and $\delta$--chaos coincide with their classical counterparts. As an application, we derive a conformable version of the Desch--Schappacher--Webb chaos criterion by transporting the classical result. The analysis is carried out in conformable Lebesgue spaces $L^{p,\delta}$, which are shown to be isometrically equivalent to standard $L^p$ spaces, allowing a direct transfer of estimates and spectral arguments. Altogether, the results clarify which dynamical features of conformable models are intrinsic and which arise solely from a nonlinear change of time.
\end{abstract}

\begin{keyword} Conformable calculus; Conformable $C_0$-semigroups; Nonlinear time change; Desch--Schappacher--Webb criterion; Hypercyclicity; Devaney chaos
\end{keyword}
\end{frontmatter}
\section{Introduction}

Fractional-type models are now widely used to describe anomalous transport, nonstandard relaxation, and memory effects in evolution problems. In the classical fractional framework, such phenomena are encoded through nonlocal operators of integral type, such as the Riemann--Liouville or Caputo derivatives. While these operators lead to genuinely new analytical features, they also introduce long-range temporal memory and a substantial increase in technical complexity.

In recent years, \emph{conformable calculus} has been proposed as an alternative fractional formalism. Its main attraction lies in its locality: for smooth functions, the conformable derivative reduces to a classical derivative multiplied by an explicit weight. This feature makes the approach appealing in operator theory and in the analysis of partial differential equations. At the same time, it raises a fundamental question: to what extent do conformable models generate dynamics that are genuinely different from those produced by classical evolution families?

Several works have pointed out that conformable derivatives lack the intrinsic memory effects characteristic of true fractional operators. This observation suggests that conformable dynamics should be understood through an explicit structural mechanism rather than by ad hoc computations. The purpose of the present paper is to provide a precise operator--theoretic answer to this issue. We show that conformable time evolution does not define a new semigroup theory, but can be described exactly as a classical $C_0$--semigroup evolving under a nonlinear change of time.

Throughout the paper we fix a single conformable order $\delta\in(0,1]$. All conformable objects (derivative, integral, semigroup, and function spaces) are indexed by this same parameter. The nonlinear time reparametrization associated with conformable calculus is given by the map
\[
\Psi(t)=\frac{t^{\delta}}{\delta},\qquad t\ge0,
\]
which is continuous, strictly increasing, and onto $[0,\infty)$. We refer to $\Psi$ as the \emph{conformable clock}. Our first main result shows that, for every conformable $C_0$--$\delta$--semigroup $\mathcal{S}_\delta=\{\mathcal{S}_\delta(t)\}_{t\ge 0}$ on a Banach space $X$, there exists a unique classical $C_0$--semigroup $\mathcal{T}= \{\mathcal{T}(s)\}_{s\ge 0}$ on the same space such that 

\begin{equation}\label{eq:intro_clock}
\mathcal S_\delta(t)=\mathcal T(\Psi(t)),\qquad t\ge0.
\end{equation}

Thus conformable evolution is nothing but classical evolution observed along a distorted time scale. The correspondence is exact at the infinitesimal level: the $\delta$--generator of $S_\delta$ coincides with the classical generator of $\mathcal T$ on a common domain, and mild solutions in conformable time are in one-to-one correspondence with classical mild solutions under the change of variables $s=\Psi(t)$.

Once this clock change is made explicit, a wide range of conformable results can be obtained directly from the classical theory. Instead of redeveloping arguments in the conformable variable, one may pass to the associated semigroup $\mathcal T$, use the established machinery of $C_0$--semigroups, and return to $\mathcal S_\delta$ through \eqref{eq:intro_clock}. This viewpoint yields transparent proofs for generation, well-posedness, dissipativity, and stability, and explains why many conformable results mirror their classical counterparts.

A second objective of the paper is to clarify the impact of the conformable clock on linear dynamics. Orbit-type properties such as hypercyclicity and Devaney chaos play a central role in the theory of $C_0$--semigroups. We show that these notions are invariant under the reparametrization $t\mapsto\Psi(t)$: $\delta$--hypercyclicity and $\delta$--chaos of $\mathcal S_\delta$ are equivalent to classical hypercyclicity and chaos of $\mathcal T$.
As a consequence, classical chaoticity criteria can be transported to conformable time without additional spectral analysis. In particular, we obtain a conformable version of the Desch--Schappacher--Webb criterion by pushing the classical theorem through the clock relation \eqref{eq:intro_clock}.

The conformable integral naturally induces the weighted measure $d\mu_\delta(t)=t^{\delta-1}\,dt$ and the spaces $L^{p,\delta}$ and $W^{m,p}_\delta$. We show that these spaces are explicitly, and in the relevant formulations isometrically, equivalent to the classical Lebesgue and Sobolev scales via the change of variables $s=\Psi(t)$. This identification provides a natural functional framework for generators, energy spaces, and boundary conditions, and allows one to transfer classical estimates and spectral arguments without loss.

Finally, we illustrate the transfer principle on conformable diffusion--transport equations posed on weighted Hilbert spaces. A nonlinear change of the spatial variable reduces the conformable operator to a classical drift--diffusion operator, yielding a unitary equivalence between the two realizations. Chaotic behavior of the conformable semigroup then follows immediately from known classical results, without performing new spectral computations. This example highlights the practical content of the conjugacy mechanism developed here: whenever a conformable model can be reduced to a classical one by an explicit change of variables, qualitative dynamical properties can be imported directly.

The contribution of this work is not a mere reformulation of known properties of conformable derivatives. It establishes a precise structural equivalence between conformable evolution and classical semigroup theory, showing that conformable dynamics does not produce a genuinely new fractional behavior. Unlike integral-type fractional models, where nonlocal memory effects are intrinsic, conformable calculus remains local and its ``fractional'' character is entirely encoded in a deterministic reparametrization of time. Our results make this mechanism explicit at every level: generators, mild solutions, functional spaces, and linear dynamics. In this sense, the paper settles the conceptual status of conformable evolution within semigroup theory and places it on a definitive analytical foundation.

The remainder of the paper is organized as follows. Section~2 recalls the elements of conformable calculus needed in the sequel and introduces conformable $C_0$--$\delta$--semigroups. Section~3 develops conformable Lebesgue and Sobolev spaces and establishes their explicit identification with the classical scales. Section~4 introduces the conformable energy space $H^1_{\delta,0}(\mathbb{R}_+)$ and proves an integration by parts formula. In Section~5 we study mild solutions and make the time-change principle precise, deriving generation results via the Lumer--Phillips theorem. Section~6 is devoted to linear dynamics: we prove the invariance of orbit-type notions under the conformable clock and deduce a conformable Desch--Schappacher--Webb criterion. Finally, Section~7 presents applications to conformable diffusion--transport equations and formulates a general conjugacy principle for transferring chaoticity from classical to conformable settings.

\section{Preliminary framework}\label{sec2}
 This section collects the notions that will be used throughout the paper. We restrict ourselves to those elements of conformable calculus that are essential for the operator--theoretic constructions developed later. A guiding principle is that conformable operators preserve \emph{locality}: for sufficiently smooth functions they reduce to classical derivatives or integrals multiplied by explicit weights. This feature sharply distinguishes conformable calculus from nonlocal fractional models of integral type \cite{kilbas2006,mainardi2010,tarasov2018} and is the key reason why a semigroup approach is available in the present setting.

The conformable framework originates from the definition introduced in \cite{khalil2014} and subsequently developed in \cite{abdeljawad2015,abdeljawad2016}. The notion of generalized semigroups associated with fractional time scales
was formalized in \cite{abdeljawad2015semigroup}. Our purpose is to connect these ideas with the classical theory of $C_0$--semigroups \cite{pazy1983,pruss1993,engel2000}, which also provides the natural background for the dynamical aspects considered later \cite{desch1997}. The emphasis is on making explicit the structural mechanisms behind conformable evolution, rather than on reproducing the general theory of fractional operators.

\subsection{Conformable integration and differentiation}

We begin with the conformable integral, which naturally induces a weighted
measure on $\R_+$.

\begin{definition}[Conformable integral {\cite{khalil2014,abdeljawad2015}}]
Let $\delta\in(0,1]$ and let $g:\R_+\to\C$ be measurable.
For $0\le a<t$, the conformable integral of order $\delta$ is defined by
\[
(\mathcal{I}_\delta g)(t)
:=
\int_a^t g(\xi)\,\xi^{\delta-1}\,d\xi,
\]
whenever the integral exists.
\end{definition}

This definition suggests working with the weighted measure
\[
d\mu_\delta(\xi):=\xi^{\delta-1}\,d\xi,
\]
and, unless otherwise specified, all integrals on $\R_+$ will be understood
with respect to $d\mu_\delta$.

The corresponding notion of differentiation is given below.

\begin{definition}[Conformable derivative {\cite{khalil2014,abdeljawad2015}}]
Let $\delta\in(0,1]$ and let $v:\R_+\to\C$.
The conformable derivative of order $\delta$ at $t>0$ is defined by
\[
\mathcal{D}_t^\delta v(t)
:=
\lim_{h\to 0}
\frac{v\bigl(t+h\,t^{1-\delta}\bigr)-v(t)}{h},
\]
provided the limit exists.
For $\delta=1$, this reduces to the usual derivative.
\end{definition}

The local nature of this operator is made explicit by the following
representation property; see \cite{abdeljawad2015,abdeljawad2016}.

\begin{proposition}
Let $v\in C^1((0,\infty))$ and $\delta\in(0,1]$. Then, for every $t>0$,
\[
\mathcal{D}_t^\delta v(t)
=
t^{1-\delta}\,\frac{dv}{dt}(t).
\]
\end{proposition}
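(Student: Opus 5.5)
The plan is to reduce the conformable difference quotient to an ordinary difference quotient of $v$ at $t$ by a linear change of the increment variable, and then invoke the differentiability of $v$ at the point $t$. Fix $t>0$ and put $c:=t^{1-\delta}$. Since $t>0$, we have $c>0$, and $c$ does not depend on $h$. For $h\neq0$ with $|h|$ small enough that $t+hc>0$, so that $v(t+hc)$ is defined, set $k:=hc$. Then $k\neq0$, $k\to0$ if and only if $h\to0$, and
\[
\frac{v\bigl(t+h\,t^{1-\delta}\bigr)-v(t)}{h}
=
c\,\frac{v(t+k)-v(t)}{k}.
\]

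Next we use that $v\in C^1((0,\infty))$. In fact only differentiability of $v$ at the single point $t$ is needed. The quotient $\frac{v(t+k)-v(t)}{k}$ tends to $v'(t)$ as $k\to0$. The map $h\mapsto k=ch$ is a bijection of a punctured neighbourhood of $0$ onto a punctured neighbourhood of $0$. Composing the limit in $k$ with this map therefore gives that the limit as $h\to0$ exists and equals $c\,v'(t)=t^{1-\delta}\,\frac{dv}{dt}(t)$. This is exactly the statement. It also shows that the conformable derivative exists at $t$ whenever the classical one does.

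For completeness, note the case $\delta=1$: there $c=1$ and the identity is trivial, consistent with the remark after the definition.

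There is no real obstacle here. The only points to be careful about are that $c=t^{1-\delta}$ is a fixed positive constant, which is why $t>0$ is required (at $t=0$ with $\delta<1$ we would have $c=0$ and the argument degenerates), and that the points $t+hc$ stay in $(0,\infty)$ for small $|h|$. Both are immediate.
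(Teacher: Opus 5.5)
Your proof is correct: because $t^{1-\delta}>0$ is fixed, the substitution $k=h\,t^{1-\delta}$ turns the conformable quotient into $t^{1-\delta}$ times the ordinary difference quotient of $v$ at $t$, and differentiability at $t$ finishes the argument. The paper gives no proof of its own here and simply refers to the conformable-calculus literature, where this same rescaling of the increment is the standard proof.
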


Thus conformable differentiation acts as a weighted classical derivative and
does not involve memory effects.
This structural feature will play a decisive role in the analysis of the
associated evolution families.

\subsection{Conformable semigroups}

Conformable calculus also leads to a generalized notion of one--parameter
semigroups obtained through a nonlinear rescaling of time
\cite{abdeljawad2015semigroup}.

\begin{definition}[Conformable $\delta$--semigroup]
Let $\delta\in(0,a]$ for some $a>0$, and let $X$ be a Banach space.
A family of bounded linear operators
\(
\mathcal{S}_\delta=\{\mathcal{S}_\delta(t)\}_{t\ge0}\subset\mathcal{L}(X)
\)
is called a \emph{conformable $\delta$--semigroup} if:
\begin{enumerate}
\item $\mathcal{S}_\delta(0)=\mathrm{Id}$ on $X$;
\item for all $r,q\ge0$,
\[
\mathcal{S}_\delta\bigl((r+q)^{1/\delta}\bigr)
=
\mathcal{S}_\delta\bigl(r^{1/\delta}\bigr)\,
\mathcal{S}_\delta\bigl(q^{1/\delta}\bigr).
\]
\end{enumerate}
\end{definition}

For $\delta=1$ this reduces to the classical semigroup law
\cite{pazy1983,engel2000}.

\begin{example}
Let $B\in\mathcal{L}(X)$ be bounded and define
\[
\mathcal{S}_{1/2}(t)=\exp\!\bigl(2\,t^{1/2}B\bigr),\qquad t\ge0.
\]
Then $\mathcal{S}_{1/2}$ is a conformable $\tfrac12$--semigroup.
\end{example}

\begin{example}
Let $X=C([0,\infty))$ endowed with the supremum norm and set
\[
(\mathcal{S}_{1/2}(t)f)(x)=f\bigl(x+2\,t^{1/2}\bigr),
\qquad x,t\ge0.
\]
Then $\mathcal{S}_{1/2}$ defines a $\tfrac12$--semigroup of bounded linear
operators on $X$.
\end{example}

\begin{definition}[$C_0$--$\delta$--semigroup]
A conformable $\delta$--semigroup $\mathcal{S}_\delta$ is said to be a
\emph{$C_0$--$\delta$--semigroup} if
\[
\lim_{t\to0^+}\mathcal{S}_\delta(t)x=x
\quad\text{for all }x\in X.
\]
\end{definition}

The infinitesimal behavior of $\mathcal{S}_\delta$ at the origin is described by
its $\delta$--generator, defined by
\[
D(\mathcal{A}_\delta)
=
\Bigl\{
x\in X:\;
\lim_{t\to0^+}\mathcal{D}_t^\delta\mathcal{S}_\delta(t)x
\ \text{exists}
\Bigr\},
\qquad
\mathcal{A}_\delta x
=
\lim_{t\to0^+}\mathcal{D}_t^\delta\mathcal{S}_\delta(t)x.
\]

\begin{remark}
In the limiting case $\delta=1$, one recovers the classical framework of
$C_0$--semigroups and their generators \cite{pazy1983,engel2000}.
\end{remark}

\section{Conformable Lebesgue and Sobolev spaces}\label{sec3}

The conformable derivative modifies the time scale in a non-uniform way.
At the level of integration, this deformation appears through the weight
$t^{\delta-1}$ (with $\delta\in(0,1]$), which is already present in the
conformable integral introduced in Section~\ref{sec2}:
\[
(\mathcal I_\delta f)(t)=\int_0^t f(s)\,s^{\delta-1}\,ds.
\]
It is therefore natural to work on $\R_+=(0,\infty)$ with the weighted measure
\[
d\mu_\delta(t):=t^{\delta-1}\,dt.
\]
This choice incorporates the conformable change of time into the functional
setting and is well suited for energy methods, compactness arguments, and the
analysis of generators arising in conformable evolution equations.
In the present section we introduce the corresponding Lebesgue and Sobolev
spaces and record the properties needed later.
Whenever a statement is a direct consequence of standard weighted $L^p$ theory,
we simply indicate it and refer to the classical literature.

\subsection{Conformable Lebesgue spaces}

Let $I\subset\R_+$ be an interval and fix $\delta\in(0,1]$.

\begin{definition}[Conformable Lebesgue spaces]\label{def:Lpdelta}
For $1\le p<\infty$, the conformable Lebesgue space on $I$ is defined by
\[
L^{p,\delta}(I)
:=
\Bigl\{
f:I\to\C \ \text{measurable} \,;\,
\int_I |f(t)|^p\,d\mu_\delta(t)<\infty
\Bigr\},
\]
endowed with the norm
\[
\|f\|_{p,\delta}
:=
\Bigl(\int_I |f(t)|^p\,d\mu_\delta(t)\Bigr)^{1/p}
=
\Bigl(\int_I |f(t)|^p\,t^{\delta-1}\,dt\Bigr)^{1/p}.
\]
\end{definition}

\begin{remark}[Weighted interpretation]\label{rem:weightedLp}
Since $t^{\delta-1}$ is locally integrable on $(0,\infty)$ for every $\delta>0$,
the measure space $(I,\mu_\delta)$ is $\sigma$--finite.
Hence $L^{p,\delta}(I)$ coincides with the classical weighted space
$L^p(I,t^{\delta-1}dt)$.
Standard results such as H\"older and Minkowski inequalities, density of simple
functions, and completeness follow from the general theory of weighted $L^p$
spaces; see, for instance, \cite{KufnerWeighted,AdamsFournier}.
\end{remark}

A key point for our purposes is that the weight $t^{\delta-1}$ can be removed by
a simple change of variables, which makes the conformable scale explicitly
comparable to the classical one.

\begin{proposition}[Isometric reduction to the classical $L^p$ scale]\label{prop:isometryLp}
Let $I=(0,T)$ with $0<T\le\infty$ and $\delta\in(0,1]$.
Define the increasing map
\[
\Psi(t):=\frac{t^\delta}{\delta}, \qquad t\in(0,T).
\]
Then
\[
U:L^{p,\delta}(0,T)\longrightarrow L^p\bigl(0,\Psi(T)\bigr),
\qquad
(Uf)(s)=f\bigl((\delta s)^{1/\delta}\bigr),
\]
is a linear isometric isomorphism.
\end{proposition}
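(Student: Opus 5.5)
The plan is to show that $\Psi$ is a $C^1$ diffeomorphism that carries the measure $\mu_\delta$ onto Lebesgue measure, and then read off the isometry from the change-of-variables formula. First I will record the properties of $\Psi$. It maps $(0,T)$ onto $(0,\Psi(T))$, with the convention $\Psi(\infty)=\infty$. It is strictly increasing and $C^1$ on $(0,T)$, with $\Psi'(t)=t^{\delta-1}>0$. Its inverse is $\Psi^{-1}(s)=(\delta s)^{1/\delta}$, which is also $C^1$ on $(0,\Psi(T))$. Hence $(Uf)(s)=f(\Psi^{-1}(s))$, that is, $U f=f\circ\Psi^{-1}$.

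Next I will check that $U$ is well defined on equivalence classes. Composition with a $C^1$ diffeomorphism between open intervals preserves Lebesgue measurability. It also maps null sets to null sets, because a $C^1$ diffeomorphism is locally Lipschitz in both directions. Moreover, $\mu_\delta$ and Lebesgue measure have the same null sets on $(0,T)$, since the density $t^{\delta-1}$ is positive and finite there. So if $f=g$ $\mu_\delta$-a.e., then $Uf=Ug$ a.e. Linearity of $U$ is immediate.

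The core step is the substitution $s=\Psi(t)$, $ds=t^{\delta-1}\,dt$, applied to the nonnegative measurable function $|f|^p$:
\[
\int_0^{\Psi(T)}|(Uf)(s)|^p\,ds=\int_0^{\Psi(T)}\bigl|f(\Psi^{-1}(s))\bigr|^p\,ds=\int_0^T|f(t)|^p\,t^{\delta-1}\,dt=\|f\|_{p,\delta}^p .
\]
This holds for arbitrary nonnegative measurable integrands, with both sides possibly infinite, by the change-of-variables theorem for monotone $C^1$ maps. To make it rigorous I will use the monotone convergence theorem, exhausting $(0,T)$ by compact subintervals $[a,b]$ on which $\Psi$ is bi-Lipschitz. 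Equivalently, I will verify that the pushforward $\Psi_\#\mu_\delta$ equals Lebesgue measure on intervals, since $\mu_\delta((a,b))=\Psi(b)-\Psi(a)$, and extend to all Borel sets by the $\pi$–$\lambda$ theorem. It follows that $U$ maps $L^{p,\delta}(0,T)$ into $L^p(0,\Psi(T))$ and $\|Uf\|_p=\|f\|_{p,\delta}$.

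Finally, for surjectivity I will define $Vg:=g\circ\Psi$. The same computation, read in the opposite direction, gives $\|Vg\|_{p,\delta}=\|g\|_p$, and clearly $UV=\mathrm{Id}$ and $VU=\mathrm{Id}$. Hence $U$ is a linear isometric isomorphism with inverse $V$. The only point requiring care is the justification of the substitution at the endpoints: near $t=0$ the weight $t^{\delta-1}$ is unbounded when $\delta<1$, and $T$ may be infinite. Handling this through the monotone exhaustion argument above is the main, and only mild, obstacle.
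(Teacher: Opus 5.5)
Your proof is correct and follows the paper's argument: the substitution $s=\Psi(t)$, $ds=t^{\delta-1}\,dt$ gives $\|Uf\|_{L^p(0,\Psi(T))}=\|f\|_{p,\delta}$, and surjectivity comes from the explicit inverse $g\mapsto g\circ\Psi$. Your additional checks fill in details the paper leaves implicit: well-definedness of $U$ on a.e.-equivalence classes, and justifying the change of variables near $t=0$ and for $T=\infty$ by monotone exhaustion or the pushforward identity $\Psi_\#\mu_\delta=$ Lebesgue measure.
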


\begin{proof}
The map $\Psi$ is a strictly increasing $C^1$--diffeomorphism from $(0,T)$ onto
$(0,\Psi(T))$ and satisfies $\Psi'(t)=t^{\delta-1}$.
With the change of variables $s=\Psi(t)$, we obtain
\[
\int_0^T |f(t)|^p\,t^{\delta-1}\,dt
=
\int_0^{\Psi(T)} \bigl|f\bigl((\delta s)^{1/\delta}\bigr)\bigr|^p\,ds,
\]
hence $\|Uf\|_{L^p(0,\Psi(T))}=\|f\|_{p,\delta}$.
Linearity is immediate, and surjectivity follows by setting
$f(t):=g(\Psi(t))$ for $g\in L^p(0,\Psi(T))$.
\end{proof}

\begin{proposition}[Completeness and Hilbert case]\label{prop:BanachHilbertLp}
For every $1\le p<\infty$, the normed space $\bigl(L^{p,\delta}(I),\|\cdot\|_{p,\delta}\bigr)$
is Banach.
Moreover, for $p=2$, the inner product
\[
(f,g)_{2,\delta}:=\int_I f(t)\overline{g(t)}\,d\mu_\delta(t)
\]
turns $L^{2,\delta}(I)$ into a Hilbert space.
\end{proposition}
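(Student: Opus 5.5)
The plan is to transport completeness from the classical Lebesgue scale through the isometry of Proposition~\ref{prop:isometryLp}, and then check the Hilbert structure directly. We take $I=(0,T)$ with $0<T\le\infty$, which is the setting of Proposition~\ref{prop:isometryLp}. Any other interval $I\subset\R_+$ can be handled in the same way: since $\Psi$ is a $C^1$ diffeomorphism of $(0,\infty)$ with $\Psi'(t)=t^{\delta-1}$, it maps $I$ onto the interval $\Psi(I)$, and the substitution $s=\Psi(t)$ gives an isometry $L^{p,\delta}(I)\to L^p(\Psi(I))$ by exactly the same computation.

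\textbf{Completeness.} As usual, elements of $L^{p,\delta}(I)$ are classes of functions that agree $\mu_\delta$-a.e. The measure $\mu_\delta$ and Lebesgue measure have the same null sets on $(0,\infty)$, because the density $t^{\delta-1}$ is strictly positive there. The map $\Psi$ carries Lebesgue null sets to Lebesgue null sets, and so does its inverse. Hence the operator $U$ is well defined on equivalence classes, and $\|\cdot\|_{p,\delta}$ is a genuine norm, not merely a seminorm.

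Now let $(f_n)$ be Cauchy in $L^{p,\delta}(I)$. Since $U$ is an isometry, $(Uf_n)$ is Cauchy in $L^p(\Psi(I))$. By the Riesz--Fischer theorem, $Uf_n\to g$ in $L^p(\Psi(I))$. Because $U$ is surjective, there is $f$ with $Uf=g$, namely $f=g\circ\Psi$. The isometry then gives $\|f_n-f\|_{p,\delta}=\|Uf_n-g\|_{L^p}\to 0$. Alternatively, one could simply cite completeness of $L^p$ over a $\sigma$-finite measure space, as in Remark~\ref{rem:weightedLp}. I will present the transport argument, since it matches the paper's philosophy.

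\textbf{Hilbert case.} We must first check that $(f,g)_{2,\delta}$ is finite. By the Cauchy--Schwarz inequality for $\mu_\delta$,
\[
\int_I |f\bar g|\,d\mu_\delta\le \|f\|_{2,\delta}\,\|g\|_{2,\delta}<\infty .
\]
Next we verify the inner product axioms: sesquilinearity, Hermitian symmetry, and $(f,f)_{2,\delta}=\|f\|_{2,\delta}^2\ge0$. These are immediate from linearity of the integral. Definiteness follows from the a.e. convention above. Since $(f,f)_{2,\delta}=\|f\|_{2,\delta}^2$, the inner product induces the norm $\|\cdot\|_{2,\delta}$, and completeness from the first part then yields a Hilbert space.

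As a consistency check, the change of variables $s=\Psi(t)$ shows that $(f,g)_{2,\delta}=(Uf,Ug)_{L^2}$. Thus $U$ is in fact unitary for $p=2$.

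The only point needing care is the well-definedness on equivalence classes, that is, that null sets correspond under $\Psi$. This holds because $\Psi$ and $\Psi^{-1}$ are locally Lipschitz on open intervals, or equivalently because the weight is positive. Everything else is routine.
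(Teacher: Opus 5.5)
Your proof is correct. For the completeness part it takes a different route from the paper; the Hilbert part is essentially the paper's.

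\textbf{The paper's route.} The paper simply invokes the general completeness of weighted $L^p$ spaces, applied directly to the measure $\mu_\delta$. It then notes that $(\cdot,\cdot)_{2,\delta}$ is an inner product inducing $\|\cdot\|_{2,\delta}$.

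\textbf{Your route.} You instead pull completeness back from the unweighted space through the isometry $U$ of Proposition~\ref{prop:isometryLp}, extended to an arbitrary interval $I$ via $\Psi(I)$. This uses only the Lebesgue-measure Riesz--Fischer theorem.

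\textbf{What each buys.} The paper's argument is shorter and works for an arbitrary weight, indeed for any measure space. The $\sigma$-finiteness it mentions is not actually needed for completeness of $L^p$. It also needs no null-set bookkeeping. Your argument depends on the special fact that $t^{\delta-1}=\Psi'(t)$ for a diffeomorphism $\Psi$. In exchange it proves more: $L^{p,\delta}(I)$ is isometrically isomorphic to $L^p(\Psi(I))$. For $p=2$, the identity $(f,g)_{2,\delta}=(Uf,Ug)_{L^2}$ makes it unitarily equivalent to $L^2(\Psi(I))$, which is the structural point the paper stresses in Remark~\ref{rem:measure_encodes_clock}.

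You also supply a detail that the paper's Proposition~\ref{prop:isometryLp} leaves implicit: $U$ is well defined on a.e.-equivalence classes. This holds because $\mu_\delta$ and Lebesgue measure have the same null sets, and $\Psi$ and $\Psi^{-1}$ map Lebesgue null sets to Lebesgue null sets.

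Finally, your Hilbert-space part matches the paper's, with the added Cauchy--Schwarz check that the defining integral converges.
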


\begin{proof}
This is a direct consequence of the completeness of weighted $L^p$ spaces on
$\sigma$--finite measure spaces; see \cite{KufnerWeighted}.
For $p=2$, the form $(\cdot,\cdot)_{2,\delta}$ is a well-defined inner product
whose induced norm is $\|\cdot\|_{2,\delta}$, and completeness yields the Hilbert
structure.
\end{proof}

\begin{remark}\label{rem:measure_encodes_clock}
Proposition~\ref{prop:isometryLp} shows that the conformable Lebesgue scale does
not introduce a new integrability regime: it encodes the conformable time
reparametrization at the level of the measure.
This viewpoint will be mirrored by the Sobolev scale defined below.
\end{remark}

\subsection{Conformable Sobolev spaces on $\R_+$}

We now introduce Sobolev spaces associated with conformable derivatives.
Throughout this subsection, $\mathcal D_t^\delta$ denotes the conformable
derivative defined in Section~\ref{sec2}.
For smooth functions $u\in C^1((0,\infty))$, recall that
\[
\mathcal D_t^\delta u(t)=t^{1-\delta}u'(t),
\]
which emphasizes the weighted nature of the corresponding norms.

\begin{definition}[Conformable Sobolev spaces]\label{def:Wmpdelta}
Let $m\in\N$ and $1\le p<\infty$.
We define
\[
W^{m,p}_\delta(\R_+)
:=
\Bigl\{
u\in L^{p,\delta}(\R_+)\,;\,
\mathcal D_t^{k\delta}u\in L^{p,\delta}(\R_+)
\ \text{for all }k=1,\dots,m
\Bigr\},
\]
endowed with the norm
\[
\|u\|_{W^{m,p}_\delta}
:=
\Bigl(\sum_{k=0}^m \|\mathcal D_t^{k\delta}u\|_{p,\delta}^p\Bigr)^{1/p},
\qquad
\mathcal D_t^{0\cdot\delta}u:=u.
\]
\end{definition}

\begin{remark}[On the choice of definition]\label{rem:sobolev_choice}
The derivatives $\mathcal D_t^{k\delta}u$ are interpreted as iterated conformable
derivatives, taken pointwise for smooth functions and in the corresponding weak
sense in $L^{p,\delta}$.
In this work, $W^{m,p}_\delta(\R_+)$ is used as a functional framework tailored to
the local conformable calculus.
A systematic comparison with alternative Sobolev-type constructions is outside
our scope; we refer the reader to the literature on weighted Sobolev spaces for
general background (e.g. \cite{KufnerWeighted,AdamsFournier}).
\end{remark}

\begin{proposition}[Completeness]\label{prop:BanachWmp}
For all $m\in\N$ and $1\le p<\infty$, the space $W^{m,p}_\delta(\R_+)$ is a Banach space.
\end{proposition}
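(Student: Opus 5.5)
The plan is to prove completeness by an isometric embedding into a finite product of conformable Lebesgue spaces, and then to show that the image is closed. Consider the map
\[
J:W^{m,p}_\delta(\R_+)\longrightarrow \prod_{k=0}^m L^{p,\delta}(\R_+),\qquad Ju=\bigl(u,\mathcal D_t^{\delta}u,\dots,\mathcal D_t^{m\delta}u\bigr).
\]
Equip the product with the $\ell^p$-sum norm. Then $J$ is a linear isometry by the definition of $\|\cdot\|_{W^{m,p}_\delta}$. By Proposition~\ref{prop:BanachHilbertLp} each factor is Banach, so the product is Banach. It therefore suffices to show that $J\bigl(W^{m,p}_\delta(\R_+)\bigr)$ is closed.

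To prove closedness, take a Cauchy sequence $(u_n)$ in $W^{m,p}_\delta(\R_+)$. Each sequence $(\mathcal D_t^{k\delta}u_n)_n$ is Cauchy in $L^{p,\delta}(\R_+)$, so there are limits $\mathcal D_t^{k\delta}u_n\to v_k$, with $v_0=:u$. The task is to show that $\mathcal D_t^{k\delta}u=v_k$ in the weak conformable sense of Remark~\ref{rem:sobolev_choice}. I would fix the weak formulation as follows: $w=\mathcal D_t^\delta g$ weakly means
\[
\int_0^\infty g\,\mathcal D_t^{\delta,*}\varphi\,d\mu_\delta=\int_0^\infty w\,\varphi\,d\mu_\delta
\quad\text{for all }\varphi\in C_c^\infty(0,\infty).
\]
Here the formal adjoint with respect to $d\mu_\delta$ is computed from $\mathcal D_t^\delta\varphi=t^{1-\delta}\varphi'$ together with $d\mu_\delta=t^{\delta-1}dt$. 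This gives $\int g\,t^{1-\delta}\varphi'\,t^{\delta-1}dt=\int g\varphi'\,dt$, so $\mathcal D_t^{\delta,*}\varphi=-t^{1-\delta}\varphi'$. In other words, the weak identity is just $-\int g\varphi'\,dt=\int w\varphi\,dt$; that is, $w\,t^{\delta-1}=g'$ in $\mathcal D'(0,\infty)$.

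The passage to the limit works as follows. For fixed $\varphi\in C_c^\infty(0,\infty)$, the function $t^{1-\delta}\varphi'$ is bounded with compact support. It therefore lies in $L^{p',\delta}(\R_+)$, and so does $\varphi$. By H\"older's inequality in $L^{p,\delta}$, the functionals $g\mapsto\int g\,\mathcal D_t^{\delta,*}\varphi\,d\mu_\delta$ and $g\mapsto\int g\varphi\,d\mu_\delta$ are continuous on $L^{p,\delta}(\R_+)$. The identity
\[
\int \mathcal D_t^{(k-1)\delta}u_n\,\mathcal D_t^{\delta,*}\varphi\,d\mu_\delta=\int \mathcal D_t^{k\delta}u_n\,\varphi\,d\mu_\delta
\]
therefore passes to the limit and yields $\mathcal D_t^\delta v_{k-1}=v_k$ weakly. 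Induction on $k$ then gives $\mathcal D_t^{k\delta}u=v_k$ for $k=1,\dots,m$, so $u\in W^{m,p}_\delta(\R_+)$. Finally, $\|u_n-u\|_{W^{m,p}_\delta}^p=\sum_k\|\mathcal D_t^{k\delta}u_n-v_k\|_{p,\delta}^p\to0$.

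An alternative route is to transport everything to the classical scale via Proposition~\ref{prop:isometryLp}. With $s=\Psi(t)$, one has $\mathcal D_t^\delta u(t)=\frac{d}{ds}(Uu)(s)$, so $U$ maps $W^{m,p}_\delta(\R_+)$ isometrically onto the classical $W^{m,p}(\R_+)$, which is complete. I would mention this only as a remark, since making it rigorous at the weak level requires the same distributional bookkeeping as above.

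The main obstacle is not analytic but definitional: one must pin down the weak sense of $\mathcal D_t^{k\delta}$ precisely, as the paper only sketches it, and then check that the test functions land in $L^{p',\delta}$. The case $p=1$, where $p'=\infty$, also needs attention; there one uses boundedness of $\varphi$ and of $t^{1-\delta}\varphi'$ directly. Once the weak formulation is fixed through this duality, the rest is the standard Sobolev completeness argument.
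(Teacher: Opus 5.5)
Your proof is correct and follows the same route as the paper's: a Cauchy sequence in $W^{m,p}_\delta(\R_+)$ yields limits $v_k$ of $\mathcal D_t^{k\delta}u_n$ in $L^{p,\delta}(\R_+)$, and these are identified as $v_k=\mathcal D_t^{k\delta}u$ through the weak formulation, a step the paper only asserts and which you justify properly by testing against $\varphi$ and $t^{1-\delta}\varphi'$ in $L^{p',\delta}$ and inducting on $k$. One harmless slip: the unweighted form of the weak identity is $-\int g\varphi'\,dt=\int w\varphi\,t^{\delta-1}\,dt$ (you dropped the weight on the right-hand side), and this is exactly what your correct conclusion $g'=w\,t^{\delta-1}$ in $\mathcal D'(0,\infty)$ says.
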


\begin{proof}
Let $(u_n)$ be Cauchy in $W^{m,p}_\delta(\R_+)$.
Then for each $k=0,\dots,m$, the sequence $(\mathcal D_t^{k\delta}u_n)$ is Cauchy in
$L^{p,\delta}(\R_+)$, hence converges to some $v_k\in L^{p,\delta}(\R_+)$.
Setting $u:=v_0$, we obtain $u_n\to u$ in $L^{p,\delta}(\R_+)$.
By the definition of weak conformable derivatives, the limits satisfy
$v_k=\mathcal D_t^{k\delta}u$, so $u\in W^{m,p}_\delta(\R_+)$ and $u_n\to u$ in the
Sobolev norm.
\end{proof}

\subsubsection*{Hilbert case}

\begin{definition}[Conformable Hilbert--Sobolev spaces]\label{def:Hmdelta}
For $m\in\N$, we set
\[
H_\delta^m(\R_+):=W^{m,2}_\delta(\R_+),
\qquad
\|u\|_{H_\delta^m}^2
=
\sum_{k=0}^m \|\mathcal D_t^{k\delta}u\|_{2,\delta}^2.
\]
\end{definition}

\begin{proposition}
For every $m\in\N$, the space $H_\delta^m(\R_+)$ is a Hilbert space with inner product
\[
(u,v)_{H_\delta^m}
:=
\sum_{k=0}^m
\bigl(\mathcal D_t^{k\delta}u,\mathcal D_t^{k\delta}v\bigr)_{2,\delta}.
\]
\end{proposition}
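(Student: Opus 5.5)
The plan is to reduce everything to Proposition~\ref{prop:BanachWmp} (completeness of $W^{m,2}_\delta(\R_+)=H^m_\delta(\R_+)$) and to Proposition~\ref{prop:BanachHilbertLp} (the Hilbert structure of $L^{2,\delta}(\R_+)$). Two things must be checked: that $(\cdot,\cdot)_{H^m_\delta}$ is a genuine inner product on $H^m_\delta(\R_+)$, and that it induces the norm $\|\cdot\|_{H^m_\delta}$ of Definition~\ref{def:Hmdelta}. Once both hold, completeness of that norm gives the Hilbert space property directly.

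\textbf{Step 1: the form is well defined and sesquilinear.} For $u,v\in H^m_\delta(\R_+)$, each $\mathcal D_t^{k\delta}u$ and $\mathcal D_t^{k\delta}v$ lies in $L^{2,\delta}(\R_+)$ by definition. So every summand $(\mathcal D_t^{k\delta}u,\mathcal D_t^{k\delta}v)_{2,\delta}$ is finite, by the Cauchy--Schwarz inequality in $L^{2,\delta}$. The iterated weak conformable derivatives are linear maps $u\mapsto\mathcal D_t^{k\delta}u$. Composing them with the sesquilinear, Hermitian form $(\cdot,\cdot)_{2,\delta}$ and summing the finitely many terms therefore gives a sesquilinear, Hermitian form on $H^m_\delta(\R_+)$.

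\textbf{Step 2: positivity and compatibility with the norm.} Setting $v=u$ gives
\[
(u,u)_{H^m_\delta}=\sum_{k=0}^m\|\mathcal D_t^{k\delta}u\|_{2,\delta}^2=\|u\|_{H^m_\delta}^2\ge0 .
\]
If this vanishes, then in particular the $k=0$ term vanishes, so $\|u\|_{2,\delta}=0$. Hence $u=0$ $\mu_\delta$-a.e. Since $t^{\delta-1}>0$ on $(0,\infty)$, this is the same as $u=0$ Lebesgue-a.e., i.e.\ $u=0$ in $L^{2,\delta}(\R_+)$. So the form is positive definite, and its induced norm is exactly $\|\cdot\|_{H^m_\delta}$.

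\textbf{Step 3: completeness.} By Proposition~\ref{prop:BanachWmp} with $p=2$, the space $(H^m_\delta(\R_+),\|\cdot\|_{H^m_\delta})$ is Banach. Combined with Step 2, it is a Hilbert space.

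The only point needing care is one hidden in Step 1: the weak derivatives $\mathcal D_t^{k\delta}u$ must be uniquely determined as elements of $L^{2,\delta}(\R_+)$, so that the form is well defined and linear. The tool for this would be Proposition~\ref{prop:isometryLp}. The operator $U$ turns the weighted setting into ordinary $L^2(\R_+)$. The chain rule $\frac{d}{ds}\,u((\delta s)^{1/\delta})=t^{1-\delta}u'(t)$ shows that $U$ sends $\mathcal D_t^\delta$ to the classical derivative $d/ds$. Uniqueness of weak derivatives in $L^2(\R_+)$ then transfers to $L^{2,\delta}(\R_+)$. Transporting the whole inner product this way would, as a by-product, identify $H^m_\delta(\R_+)$ isometrically with the classical space $H^m(\R_+)$.
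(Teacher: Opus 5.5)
Your proof is correct and follows the same route as the paper: you check that the finite sum of $L^{2,\delta}$ inner products of the iterated conformable derivatives is an inner product inducing $\|\cdot\|_{H^m_\delta}$, then invoke Proposition~\ref{prop:BanachWmp} with $p=2$ for completeness. Your version is slightly more careful than the paper's, since you note that definiteness comes only from the $k=0$ term (the terms with $k\ge1$ are only semi-inner products), and that the weak derivatives are well defined via the isometry $U$ of Proposition~\ref{prop:isometryLp}.
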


\begin{proof}
Each term defines an inner product on $L^{2,\delta}(\R_+)$, and the finite sum is
again an inner product on $H_\delta^m(\R_+)$.
Completeness follows from Proposition~\ref{prop:BanachWmp} with $p=2$.
\end{proof}

\subsubsection*{Vanishing trace subspace}

\begin{definition}[The space $H_{\delta,0}^1(\R_+)$]\label{def:H10delta}
We define
\[
H_{\delta,0}^1(\R_+)
:=
\overline{C_c^\infty(0,\infty)}^{\,H_\delta^1}.
\]
\end{definition}

\begin{remark}\label{rem:H10_role}
The space $H_{\delta,0}^1(\R_+)$ can be viewed as the conformable analogue of the
classical $H_0^1$ on a half-line.
It is adapted to homogeneous boundary conditions at $t=0$ and will play a central
role in the construction of generators and energy identities in the next section.
\end{remark}

\begin{remark}[Local versus nonlocal fractional settings]\label{rem:local_vs_nonlocal}
The above constructions rely crucially on the locality of $\mathcal D_t^\delta$ and
on the weight encoded in $\mu_\delta$.
They should not be confused with Sobolev frameworks tailored to nonlocal fractional
operators of integral type, where memory effects are structural
\cite{kilbas2006,mainardi2010,tarasov2018}.
\end{remark}

\section{Mild solutions, time reparametrization, and generation of conformable semigroups}
\label{sec:mild-lumer}

This section develops a semigroup-based approach to conformable evolution
equations.
Two complementary perspectives will be used repeatedly:
\begin{itemize}
\item[(i)] the \emph{mild formulation} generated by a $C_0$--$\delta$--semigroup,
which provides a natural notion of solution in Banach spaces;
\item[(ii)] a \emph{time-change interpretation} showing that conformable dynamics
is classical $C_0$--semigroup evolution observed along a nonlinear clock.
\end{itemize}
The second viewpoint is particularly useful, since it allows one to transport
generation and stability results from the standard theory to the conformable
setting. In particular, it gives direct access to the Lumer--Phillips criterion
in Hilbert spaces.

Throughout, $X$ denotes a Banach space (and later a Hilbert space when required),
and
\[
\mathcal S_\delta=\{\mathcal S_\delta(t)\}_{t\ge0}\subset\mathcal L(X),
\qquad \delta\in(0,1],
\]
is a $C_0$--$\delta$--semigroup with $\delta$--generator $\mathcal A_\delta$.

\subsection*{Time reparametrization and the induced classical semigroup}

A basic feature of conformable calculus is the nonlinear change of time
\[
s=\Psi(t):=\frac{t^\delta}{\delta},\qquad t\ge0,
\]
which is continuous, strictly increasing, and maps $[0,\infty)$ onto itself.
We will refer to $\Psi$ as the \emph{conformable clock}.
Using this reparametrization, we associate to $\mathcal S_\delta$ the operator family
\[
\mathcal T(s):=\mathcal S_\delta\!\bigl(\Psi^{-1}(s)\bigr)
=\mathcal S_\delta\!\bigl((\delta s)^{1/\delta}\bigr),
\qquad s\ge0.
\]

\begin{lemma}\label{lem:TisC0_combined}
The family $\{\mathcal T(s)\}_{s\ge0}$ is a (classical) $C_0$--semigroup on $X$.
\end{lemma}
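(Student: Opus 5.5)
We verify the three defining properties of a $C_0$--semigroup for $\mathcal T(s)=\mathcal S_\delta\bigl((\delta s)^{1/\delta}\bigr)$ by translating them back to the axioms of a $C_0$--$\delta$--semigroup through the clock $\Psi$.

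First, $\mathcal T(0)=\mathcal S_\delta(0)=\mathrm{Id}$ by axiom 1.

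Next, for the semigroup law, fix $s_1,s_2\ge0$ and set $r=\delta s_1$, $q=\delta s_2$, both nonnegative. Then $(\delta(s_1+s_2))^{1/\delta}=(r+q)^{1/\delta}$, so axiom 2 gives
\[
\mathcal T(s_1+s_2)=\mathcal S_\delta\bigl((r+q)^{1/\delta}\bigr)=\mathcal S_\delta\bigl(r^{1/\delta}\bigr)\mathcal S_\delta\bigl(q^{1/\delta}\bigr)=\mathcal T(s_1)\mathcal T(s_2).
\]
The point is that the conformable law is exactly the additive law in the variable $r=t^\delta=\delta\Psi(t)$. The constant factor $\delta$ is harmless because $r,q$ range over all of $[0,\infty)$.

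Finally, for strong continuity at the origin, let $x\in X$ and $s\to0^+$. Then $t(s):=(\delta s)^{1/\delta}\to0^+$, since $\Psi^{-1}$ is continuous, strictly increasing and vanishes at $0$. The $C_0$ property of $\mathcal S_\delta$ yields $\mathcal T(s)x=\mathcal S_\delta(t(s))x\to x$. This limit statement is all that is needed. If one also wants strong continuity on all of $[0,\infty)$, it follows from the standard classical argument: local boundedness near $0$ comes from the uniform boundedness principle applied to sequences $s_n\to0$, and the semigroup law then propagates continuity, as in \cite{pazy1983,engel2000}. We will simply invoke that result.

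I expect no step to be a real obstacle. The only point needing care is the bookkeeping of the factor $\delta$ in the semigroup law, namely checking that the substitution $r=\delta s$ matches the form $\mathcal S_\delta((r+q)^{1/\delta})$ of the axiom. If desired, one can add the remark that $\mathcal T$ is uniquely determined by $\mathcal S_\delta$, because $\Psi$ is a bijection of $[0,\infty)$. This remark also gives $\mathcal S_\delta(t)=\mathcal T(\Psi(t))$.
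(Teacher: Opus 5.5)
Your proposal is correct and follows the paper's proof essentially step for step: the identity at $0$, the semigroup law via the substitution $r=\delta s_1$, $q=\delta s_2$ (the paper writes this as $t_i^\delta=\delta s_i$), and strong continuity at $0$ from $(\delta s)^{1/\delta}\to0^+$. Your extra remark on continuity on all of $[0,\infty)$ is harmless but not needed, since strong continuity at $0$ is already the defining $C_0$ condition used in the paper.
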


\begin{proof}
\emph{Identity at $0$.}
Since $\Psi^{-1}(0)=0$ and $\mathcal S_\delta(0)=I$, we have $\mathcal T(0)=I$.

\smallskip
\emph{Semigroup law.}
Fix $s,r\ge0$ and set $t_1=\Psi^{-1}(s)=(\delta s)^{1/\delta}$ and
$t_2=\Psi^{-1}(r)=(\delta r)^{1/\delta}$.
Then
\[
\Psi^{-1}(s+r)=(\delta(s+r))^{1/\delta}=(t_1^\delta+t_2^\delta)^{1/\delta}.
\]
Using the defining composition rule of a $\delta$--semigroup,
\[
\mathcal T(s+r)
=\mathcal S_\delta\!\bigl((t_1^\delta+t_2^\delta)^{1/\delta}\bigr)
=\mathcal S_\delta(t_1)\,\mathcal S_\delta(t_2)
=\mathcal T(s)\,\mathcal T(r).
\]

\smallskip
\emph{Strong continuity at $0$.}
Let $x\in X$. As $s\to0^+$, $\Psi^{-1}(s)=(\delta s)^{1/\delta}\to0$.
Since $\mathcal S_\delta$ is strongly continuous at $0$,
\[
\|\mathcal T(s)x-x\|
=\|\mathcal S_\delta(\Psi^{-1}(s))x-x\|\longrightarrow 0.
\]
Thus $\mathcal T$ is a $C_0$--semigroup.
\end{proof}

\subsection*{Mild solutions in conformable time and in classical time}

We now record the mild formulation in both time variables.

\begin{definition}[Mild solutions]\label{def:mild_combined}
\leavevmode
\begin{enumerate}
\item
A mapping $x:[0,\infty)\to X$ is called a \emph{mild solution} of the conformable
Cauchy problem
\[
\mathcal D_t^\delta x(t)=\mathcal A_\delta x(t),
\qquad x(0)=x_0,
\]
if it is given by
\[
x(t)=\mathcal S_\delta(t)x_0,\qquad t\ge0.
\]

\item
A mapping $y:[0,\infty)\to X$ is called a \emph{mild solution} of the classical
Cauchy problem
\[
y'(s)=\mathcal B y(s),\qquad y(0)=y_0,
\]
if it is given by
\[
y(s)=\mathcal T(s)y_0,\qquad s\ge0,
\]
where $\mathcal B$ denotes the generator of the $C_0$--semigroup $\mathcal T$.
\end{enumerate}
\end{definition}

\begin{remark}\label{rem:mild_general}
In both settings, the mild solution is defined through the evolution family.
This is the natural notion in Banach spaces when the initial datum is not
assumed to belong to the operator domain.
\end{remark}

\subsection*{Identification of generators through the clock change}

The next result shows that the conformable generator coincides with the
classical generator of the time-changed semigroup.

\begin{theorem}[Coincidence of generators]\label{thm:generators_combined}
Let $\mathcal A_\delta$ be the $\delta$--generator of $\mathcal S_\delta$, and let
$\mathcal B$ be the (classical) generator of $\mathcal T$.
Then
\[
D(\mathcal A_\delta)=D(\mathcal B),
\qquad
\mathcal A_\delta x=\mathcal Bx\quad\text{for all }x\in D(\mathcal A_\delta).
\]
\end{theorem}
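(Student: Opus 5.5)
The plan is to compare the two difference quotients directly. The first task is to pin down what "$\lim_{t\to0^+}\mathcal D_t^\delta\mathcal S_\delta(t)x$" means. I will use the difference-quotient form of the definition in Section~\ref{sec2}, evaluated at the base point $0$, with the factor $t^{1-\delta}$ absorbed into the increment. This is the standard reading of the conformable generator, going back to \cite{abdeljawad2015semigroup}:
\[
\mathcal A_\delta x=\lim_{t\to0^+}\frac{\mathcal S_\delta(t)x-x}{t^{\delta}/\delta}
=\lim_{t\to0^+}\frac{\mathcal S_\delta(t)x-x}{\Psi(t)} .
\]
If one prefers the pointwise reading $\lim_{t\to0^+}t^{1-\delta}\frac{d}{dt}\mathcal S_\delta(t)x$, I would first show that it agrees with this quotient. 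The tool is Lemma~\ref{lem:TisC0_combined}: writing $\mathcal S_\delta(t)=\mathcal T(\Psi(t))$ and applying the chain rule gives
\[
t^{1-\delta}\frac{d}{dt}\mathcal T(\Psi(t))x=t^{1-\delta}\Psi'(t)\,\mathcal T'(\Psi(t))x=\mathcal T'(\Psi(t))x,
\]
because $\Psi'(t)=t^{\delta-1}$. The weight cancels exactly.

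With the quotient form in hand, the core step is the substitution $s=\Psi(t)$. By construction $\mathcal S_\delta(t)=\mathcal T(\Psi(t))$, so
\[
\frac{\mathcal S_\delta(t)x-x}{\Psi(t)}=\frac{\mathcal T(s)x-x}{s},\qquad s=\Psi(t).
\]
Since $\Psi$ is a strictly increasing homeomorphism of $[0,\infty)$ with $\Psi(0)=0$, the limit $t\to0^+$ holds if and only if $s\to0^+$. Hence one limit exists exactly when the other does, and the two limits coincide. This gives $D(\mathcal A_\delta)=D(\mathcal B)$ and $\mathcal A_\delta=\mathcal B$ on that domain, with both inclusions following from the same equivalence.

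The main obstacle is the pointwise-derivative reading, where the orbit $t\mapsto\mathcal S_\delta(t)x$ need not be differentiable for $t>0$.

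\emph{Inclusion $D(\mathcal B)\subset D(\mathcal A_\delta)$.} For $x\in D(\mathcal B)$, classical theory \cite{pazy1983,engel2000} shows that $s\mapsto\mathcal T(s)x$ is $C^1$ with derivative $\mathcal T(s)\mathcal Bx$. By the chain-rule computation above, $\mathcal D_t^\delta\mathcal S_\delta(t)x=\mathcal T(\Psi(t))\mathcal Bx$, and this tends to $\mathcal Bx$ by strong continuity.

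\emph{Inclusion $D(\mathcal A_\delta)\subset D(\mathcal B)$.} Suppose $\mathcal D_t^\delta\mathcal S_\delta(t)x$ exists near $0$ and converges to $y$. Then $g(s):=\mathcal T(s)x$ is differentiable on $(0,\varepsilon)$ with $g'(s)\to y$ as $s\to0^+$. I would apply the mean value inequality to $g(s)-g(\eta)-(s-\eta)y$ and let $\eta\to0^+$, using continuity of $g$ at $0$. This yields $\|(\mathcal T(s)x-x)/s-y\|\le\sup_{(0,s)}\|g'-y\|\to0$, so $x\in D(\mathcal B)$ and $\mathcal Bx=y$.
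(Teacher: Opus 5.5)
Your core step is exactly the paper's proof: you rewrite $(\mathcal S_\delta(t)x-x)/\Psi(t)$ as $(\mathcal T(s)x-x)/s$ with $s=\Psi(t)$, and use that $t\to0^+$ if and only if $s\to0^+$.

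Where you differ is in what $\mathcal A_\delta$ is taken to be. The paper tacitly takes ``the limit defining $\mathcal A_\delta x$'' to be $\lim_{t\to0^+}(\mathcal S_\delta(t)x-x)/(t^\delta/\delta)$. But Section~2 defines $\mathcal A_\delta x$ as $\lim_{t\to0^+}\mathcal D_t^\delta\mathcal S_\delta(t)x$, and the paper never reconciles the two. Your second part supplies that reconciliation, and it is correct:
\begin{itemize}
\item For $x\in D(\mathcal B)$, the chain rule with $\Psi'(t)=t^{\delta-1}$ gives $\mathcal D_t^\delta\mathcal S_\delta(t)x=\mathcal T(\Psi(t))\mathcal Bx\to\mathcal Bx$.
\item Conversely, suppose the conformable derivative exists on some $(0,\tau)$ and tends to $y$. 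Then $g(s)=\mathcal T(s)x$ has $g'(s)$ equal to $\mathcal D_t^\delta\mathcal S_\delta(t)x$ at $t=\Psi^{-1}(s)$, so $g'(s)\to y$. The vector-valued mean value inequality for $r\mapsto g(r)-ry$ on $[\eta,s]$, followed by $\eta\to0^+$, gives $x\in D(\mathcal B)$ and $\mathcal Bx=y$.
\end{itemize}

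The mean value step is genuinely needed. Differentiability of $s\mapsto\mathcal T(s)x$ on $(0,\infty)$ alone does not place $x$ in $D(\mathcal B)$; it holds for every $x$ when $\mathcal T$ is analytic. It is the convergence of $g'$ that carries the argument.

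You should state explicitly the pointwise fact you rely on. For fixed $t>0$, $\mathcal D_t^\delta v(t)$ exists if and only if $v'(t)$ exists, and then $\mathcal D_t^\delta v(t)=t^{1-\delta}v'(t)$; this follows by substituting $k=ht^{1-\delta}$ in the defining quotient. The paper's Proposition is stated only for $C^1$ functions, and the orbit is not known to be $C^1$ a priori.

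In short, the paper's argument is shorter but only covers the quotient form of the generator. Yours proves the theorem for the definition as actually written.
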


\begin{proof}
Fix $x\in X$ and relate $s$ and $t$ by $t=\Psi^{-1}(s)=(\delta s)^{1/\delta}$.
By definition of $\mathcal T$,
\[
\frac{\mathcal T(s)x-x}{s}
=
\frac{\mathcal S_\delta(t)x-x}{\Psi(t)}
=
\frac{\mathcal S_\delta(t)x-x}{t^\delta/\delta}.
\]
Letting $s\to0^+$ is equivalent to letting $t\to0^+$.
Hence the limit defining $\mathcal Bx$ exists if and only if the limit defining
$\mathcal A_\delta x$ exists, and the two limits coincide.
Therefore $D(\mathcal A_\delta)=D(\mathcal B)$ and $\mathcal A_\delta x=\mathcal Bx$
on this common domain.
\end{proof}

\subsection*{Equivalence of mild solutions under reparametrization}

The time change provides a direct correspondence between the two mild
formulations.

\begin{proposition}\label{prop:correspondence_combined}
Let $x_0\in X$ and set $x(t)=\mathcal S_\delta(t)x_0$.
Define
\[
y(s):=x\bigl(\Psi^{-1}(s)\bigr)=x\bigl((\delta s)^{1/\delta}\bigr),
\qquad s\ge0.
\]
Then $y(s)=\mathcal T(s)x_0$, hence $y$ is the classical mild solution generated by
$\mathcal B$.
Conversely, if $y(s)=\mathcal T(s)x_0$, then
\[
x(t):=y(\Psi(t))=y(t^\delta/\delta),\qquad t\ge0,
\]
satisfies $x(t)=\mathcal S_\delta(t)x_0$ and is the conformable mild solution.
\end{proposition}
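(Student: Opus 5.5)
The proof is a direct substitution based on the definition $\mathcal T(s)=\mathcal S_\delta(\Psi^{-1}(s))$ and on the fact that $\Psi$ is a bijection of $[0,\infty)$ onto itself with inverse $\Psi^{-1}(s)=(\delta s)^{1/\delta}$. No limiting argument is needed, since mild solutions are defined in Definition~\ref{def:mild_combined} purely through the evolution families.

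For the forward direction, fix $x_0\in X$ and put $x(t)=\mathcal S_\delta(t)x_0$. For $s\ge0$ we compute
\[
y(s)=x\bigl(\Psi^{-1}(s)\bigr)=\mathcal S_\delta\bigl(\Psi^{-1}(s)\bigr)x_0=\mathcal T(s)x_0 .
\]
The last equality is the definition of $\mathcal T$. By Lemma~\ref{lem:TisC0_combined}, $\mathcal T$ is a classical $C_0$--semigroup with generator $\mathcal B$. Definition~\ref{def:mild_combined}(2) therefore identifies $y$ as the classical mild solution with $y(0)=\mathcal T(0)x_0=x_0$.

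For the converse, let $y(s)=\mathcal T(s)x_0$ and set $x(t)=y(\Psi(t))$. Then
\[
x(t)=\mathcal T(\Psi(t))x_0=\mathcal S_\delta\bigl(\Psi^{-1}(\Psi(t))\bigr)x_0=\mathcal S_\delta(t)x_0 ,
\]
using $\Psi^{-1}\circ\Psi=\mathrm{id}$ on $[0,\infty)$. This holds because $\Psi$ is strictly increasing and onto, which is immediate from its explicit formula. Definition~\ref{def:mild_combined}(1) then shows that $x$ is the conformable mild solution. If one also wants this to be the mild solution of the Cauchy problem governed by $\mathcal A_\delta$, Theorem~\ref{thm:generators_combined} gives $\mathcal A_\delta=\mathcal B$, so both problems involve the same operator.

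There is no genuine obstacle here. The only point requiring care is to keep the roles of $\Psi$ and $\Psi^{-1}$ consistent, and to note that both compositions $\Psi\circ\Psi^{-1}$ and $\Psi^{-1}\circ\Psi$ are the identity on all of $[0,\infty)$, including at $0$. This guarantees that the correspondence $x\leftrightarrow y$ is a genuine bijection between the two sets of mild solutions.
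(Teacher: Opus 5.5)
Your proposal is correct and matches the paper's proof: both directions are the same one-line substitutions, using $\mathcal T(s)=\mathcal S_\delta(\Psi^{-1}(s))$ and $\Psi^{-1}\circ\Psi=\mathrm{id}$ on $[0,\infty)$. Your closing appeal to the generator-coincidence theorem is not needed, and the paper does not use it, because Definition~\ref{def:mild_combined} already defines both kinds of mild solution directly through the evolution families.
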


\begin{proof}
Using the definition of $\mathcal T$,
\[
y(s)=x((\delta s)^{1/\delta})
=\mathcal S_\delta((\delta s)^{1/\delta})x_0
=\mathcal T(s)x_0.
\]
For the converse, start from $y(s)=\mathcal T(s)x_0$ and set $s=\Psi(t)$:
\[
x(t)=y(\Psi(t))
=\mathcal T(\Psi(t))x_0
=\mathcal S_\delta\!\bigl(\Psi^{-1}(\Psi(t))\bigr)x_0
=\mathcal S_\delta(t)x_0.
\]
\end{proof}

\subsection*{Dissipativity and resolvent bounds in Hilbert spaces}

From this point on, we assume that $X=H$ is a complex Hilbert space with inner product
$\langle\cdot,\cdot\rangle$ and norm $\|\cdot\|$.

\begin{definition}[Dissipative and maximal dissipative operators]\label{def:mdiss}
A densely defined operator $B:D(B)\subset H\to H$ is called \emph{dissipative} if
\[
\Re\langle Bx,x\rangle\le 0
\quad\text{for all }x\in D(B).
\]
It is \emph{maximal dissipative} if there exists $\lambda>0$ such that
\[
\mathrm{Ran}(\lambda I-B)=H.
\]
\end{definition}

\begin{lemma}\label{lem:resolvent_combined}
Assume that $B$ is dissipative.
Then for every $\lambda>0$ and $x\in D(B)$,
\[
\|(\lambda I-B)x\|\ge \lambda\|x\|.
\]
Consequently, if $\lambda I-B$ is surjective, then $(\lambda I-B)^{-1}$ exists and
\[
\|(\lambda I-B)^{-1}\|\le \frac1\lambda.
\]
\end{lemma}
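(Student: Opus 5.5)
The argument is the standard one in a Hilbert space, taking Definition~\ref{def:mdiss} as the definition of dissipativity. First, fix $\lambda>0$ and $x\in D(B)$. Expand
\[
\|(\lambda I-B)x\|^2=\lambda^2\|x\|^2-2\lambda\,\Re\langle Bx,x\rangle+\|Bx\|^2 .
\]
Since $\Re\langle Bx,x\rangle\le0$ and $\|Bx\|^2\ge0$, the right-hand side is at least $\lambda^2\|x\|^2$. Taking square roots gives the first inequality.

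An alternative route avoids expanding the square. Write
\[
\lambda\|x\|^2\le \Re\langle \lambda x-Bx,x\rangle\le\|(\lambda I-B)x\|\,\|x\|,
\]
which follows from dissipativity and Cauchy--Schwarz, and then divide by $\|x\|$ when $x\neq0$. The case $x=0$ is trivial. I would present whichever is shorter, probably the Cauchy--Schwarz version.

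For the consequence, the inequality shows that $\lambda I-B$ is injective on $D(B)$: if $(\lambda I-B)x=0$, then $\lambda\|x\|\le0$, so $x=0$. If in addition $\lambda I-B$ is surjective, it is a bijection from $D(B)$ onto $H$, and its inverse $(\lambda I-B)^{-1}\colon H\to D(B)\subset H$ is defined everywhere and linear.

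To bound the inverse, take $y\in H$ and put $x=(\lambda I-B)^{-1}y$. The estimate gives $\lambda\|(\lambda I-B)^{-1}y\|\le\|y\|$. This yields boundedness with $\|(\lambda I-B)^{-1}\|\le 1/\lambda$.

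I do not expect any real obstacle here. The only point needing care is to note explicitly that the inverse is defined on all of $H$, which is where surjectivity is used; its boundedness then comes from the a priori estimate, and no closedness assumption is needed.
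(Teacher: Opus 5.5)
Your proposal is correct, and the Cauchy--Schwarz version you say you would present is exactly the paper's argument: combine $\Re\langle(\lambda I-B)x,x\rangle\ge\lambda\|x\|^2$ with Cauchy--Schwarz, then read off injectivity and the bound $\|(\lambda I-B)^{-1}\|\le 1/\lambda$ from the a priori estimate once surjectivity is assumed. Your alternative via expanding $\|(\lambda I-B)x\|^2$ is also valid and gives the same inequality.
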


\begin{proof}
Fix $\lambda>0$ and $x\in D(B)$.
Dissipativity gives
\[
\Re\langle (\lambda I-B)x,x\rangle
=
\lambda\|x\|^2-\Re\langle Bx,x\rangle
\ge \lambda\|x\|^2.
\]
By Cauchy--Schwarz,
\[
\Re\langle (\lambda I-B)x,x\rangle
\le |\langle (\lambda I-B)x,x\rangle|
\le \|(\lambda I-B)x\|\,\|x\|.
\]
Combining the two inequalities yields
$\|(\lambda I-B)x\|\ge \lambda\|x\|$.
If $\lambda I-B$ is surjective, the inequality implies injectivity, hence
bijectivity, and the bound on the inverse follows.
\end{proof}

\subsection*{Lumer--Phillips and generation in conformable time}

We recall the Lumer--Phillips characterization in a form adapted to our needs.

\begin{theorem}[Lumer--Phillips]\label{thm:lumer_phillips_combined}
Let $B$ be a densely defined operator on $H$.
Then $B$ generates a contraction $C_0$--semigroup on $H$ if and only if $B$ is
maximal dissipative.
\end{theorem}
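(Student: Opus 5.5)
The plan is to prove the two implications separately, using Lemma~\ref{lem:resolvent_combined} as the main tool for necessity and the Hille--Yosida construction for sufficiency. Throughout, maximal dissipativity is understood as in Definition~\ref{def:mdiss}: dissipativity together with $\mathrm{Ran}(\lambda_0 I-B)=H$ for some $\lambda_0>0$.

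\emph{Necessity.} Let $B$ generate a contraction semigroup $\{T(s)\}_{s\ge0}$, so $\|T(s)\|\le 1$. For $x\in D(B)$ I will use
\[
\Re\langle T(s)x-x,x\rangle=\Re\langle T(s)x,x\rangle-\|x\|^2\le \|T(s)x\|\,\|x\|-\|x\|^2\le 0 .
\]
Dividing by $s>0$ and letting $s\to0^+$ gives $\Re\langle Bx,x\rangle\le0$, so $B$ is dissipative. For the range condition, take $\lambda>0$ and the Laplace transform
\[
R(\lambda)y=\int_0^\infty e^{-\lambda s}T(s)y\,ds .
\]
This integral converges because of the contraction bound. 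The standard computation shows that $R(\lambda)y\in D(B)$ and $(\lambda I-B)R(\lambda)y=y$, so $\lambda I-B$ is surjective for every $\lambda>0$.

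\emph{Sufficiency.} Assume $B$ is densely defined, dissipative, and $\mathrm{Ran}(\lambda_0 I-B)=H$ for some $\lambda_0>0$.
\begin{enumerate}
\item By Lemma~\ref{lem:resolvent_combined}, $(\lambda_0 I-B)^{-1}$ exists as a bounded operator on $H$ with norm at most $1/\lambda_0$. In particular $B$ is closed.
\item Extend surjectivity from $\lambda_0$ to all $\lambda>0$ by a continuation argument. For $|\lambda-\lambda_0|<\lambda_0$, the Neumann series
\[
(\lambda I-B)^{-1}=\sum_{n\ge0}(\lambda_0-\lambda)^n(\lambda_0 I-B)^{-(n+1)}
\]
converges and inverts $\lambda I-B$. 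Lemma~\ref{lem:resolvent_combined} then gives $\|(\lambda I-B)^{-1}\|\le 1/\lambda$ at each such $\lambda$. Since this bound is uniform, each step can start from the new point with a radius proportional to $\lambda$. Repeating the expansion therefore covers the whole half-line $(0,\infty)$, with $\|(\lambda I-B)^{-1}\|\le1/\lambda$ for all $\lambda>0$.
\item Apply the Hille--Yosida theorem with $M=1$ and $\omega=0$ (see \cite{pazy1983,engel2000}). Concretely, form the Yosida approximants $B_\lambda=\lambda B(\lambda I-B)^{-1}$. Show that $e^{sB_\lambda}$ are contractions, and that $B_\lambda x\to Bx$ on $D(B)$, using density of $D(B)$. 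Conclude that $e^{sB_\lambda}x$ is Cauchy, uniformly for $s$ in compact sets. The limit defines a contraction $C_0$-semigroup, and one identifies its generator as $B$.
\end{enumerate}

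\emph{Main obstacle.} The delicate step is the continuation in step~2, from surjectivity at a single $\lambda_0$ to all $\lambda>0$. It only works because the resolvent bound $1/\lambda$ supplied by dissipativity is uniform. The Yosida limit in step~3 is standard and can simply be quoted. In the context of the paper this theorem is classical, so I would expect to cite \cite{pazy1983,engel2000} rather than reprove it. The plan above indicates how it follows from Lemma~\ref{lem:resolvent_combined}.
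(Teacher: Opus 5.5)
Your proposal is correct and follows essentially the same route as the paper. For necessity, you derive dissipativity from the contraction bound and cite standard surjectivity of $\lambda I-B$. For sufficiency, you use Lemma~\ref{lem:resolvent_combined}, continue the resolvent bound via the resolvent/Neumann series, and conclude with Hille--Yosida. Your difference-quotient argument for $\Re\langle Bx,x\rangle\le 0$ and your explicit continuation to all $\lambda>0$ are more careful than the paper's terse versions. One small correction: Lemma~\ref{lem:resolvent_combined} is used in the sufficiency part, not in necessity as your opening sentence states.
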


\begin{proof}
\emph{Necessity.}
Assume that $B$ generates a contraction semigroup $\{T(s)\}_{s\ge0}$.
For $x\in D(B)$, the orbit $s\mapsto T(s)x$ is differentiable and satisfies
$\frac{d}{ds}T(s)x=BT(s)x$ with $T(0)x=x$.
Let $\phi(s):=\|T(s)x\|^2$.
Differentiating at $s=0$ yields $\phi'(0)=2\Re\langle Bx,x\rangle$.
Since $\|T(s)\|\le1$, the function $\phi$ is nonincreasing near $0$, hence
$\Re\langle Bx,x\rangle\le0$.
Moreover, standard semigroup theory gives $\mathrm{Ran}(\lambda I-B)=H$ for every
$\lambda>0$, so $B$ is maximal dissipative.

\medskip
\emph{Sufficiency.}
Assume that $B$ is maximal dissipative.
Then $\mathrm{Ran}(\lambda I-B)=H$ for some $\lambda>0$, and
Lemma~\ref{lem:resolvent_combined} yields the resolvent estimate
$\|(\lambda I-B)^{-1}\|\le 1/\lambda$.
By the resolvent identity, the same estimate holds for all sufficiently large
$\lambda$, and the Hille--Yosida theorem implies that $B$ generates a contraction
$C_0$--semigroup.
\end{proof}

\begin{corollary}[Generation of conformable contraction semigroups]\label{cor:conformable_LP}
Let $\mathcal S_\delta$ be a $C_0$--$\delta$--semigroup on a Hilbert space $H$
with $\delta$--generator $\mathcal A_\delta$.
Assume that $\mathcal A_\delta$ is densely defined, dissipative, and that
\[
\mathrm{Ran}(\lambda I-\mathcal A_\delta)=H
\quad\text{for some }\lambda>0.
\]
Then $\mathcal S_\delta$ is contractive:
\[
\|\mathcal S_\delta(t)\|\le 1\qquad\text{for all }t\ge0.
\]
\end{corollary}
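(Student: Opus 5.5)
The plan is to transport the classical Lumer--Phillips theorem through the clock change, using the results of this section. Let $\mathcal T(s)=\mathcal S_\delta(\Psi^{-1}(s))$ be the induced family. By Lemma~\ref{lem:TisC0_combined} it is a classical $C_0$--semigroup on $H$. By Theorem~\ref{thm:generators_combined} its generator $\mathcal B$ satisfies $D(\mathcal B)=D(\mathcal A_\delta)$ and $\mathcal B=\mathcal A_\delta$. So $\mathcal B$ and $\mathcal A_\delta$ are the same operator. The hypotheses on $\mathcal A_\delta$ (dense domain, dissipativity, and $\mathrm{Ran}(\lambda I-\mathcal A_\delta)=H$ for some $\lambda>0$) therefore say exactly that $\mathcal B$ is densely defined and maximal dissipative in the sense of Definition~\ref{def:mdiss}.

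Next, apply Theorem~\ref{thm:lumer_phillips_combined} to $\mathcal B$: it generates a contraction $C_0$--semigroup, call it $\widetilde{\mathcal T}$. I must then identify $\widetilde{\mathcal T}$ with $\mathcal T$. This follows from the standard uniqueness fact that a $C_0$--semigroup is determined by its generator. Since $\mathcal B$ is by construction the generator of $\mathcal T$, we get $\widetilde{\mathcal T}=\mathcal T$, and hence $\|\mathcal T(s)\|\le 1$ for all $s\ge0$.

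The uniqueness is the only point needing care. I would cite it from \cite{pazy1983,engel2000}, or sketch it: for $x\in D(\mathcal B)$ and fixed $s$, the map $r\mapsto \mathcal T(s-r)\widetilde{\mathcal T}(r)x$ has derivative zero on $[0,s]$. This gives $\mathcal T(s)x=\widetilde{\mathcal T}(s)x$ on the dense set $D(\mathcal B)$, and boundedness extends the equality to all of $H$.

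Finally, return to conformable time. For $t\ge0$, $\mathcal S_\delta(t)=\mathcal T(\Psi(t))$, since $\Psi^{-1}(\Psi(t))=t$. Hence $\|\mathcal S_\delta(t)\|=\|\mathcal T(\Psi(t))\|\le1$. No step is a real obstacle: everything reduces to the identification of generators already proved and to the generator-determines-semigroup principle.
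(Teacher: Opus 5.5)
Your proposal is correct and follows the paper's proof: you pass to $\mathcal T(s)=\mathcal S_\delta(\Psi^{-1}(s))$, identify its generator with $\mathcal A_\delta$ via Theorem~\ref{thm:generators_combined}, apply Lumer--Phillips, and pull the bound back through $\mathcal S_\delta(t)=\mathcal T(\Psi(t))$. The one difference is that you state and justify explicitly the uniqueness of the $C_0$--semigroup generated by $\mathcal B$, which the paper uses tacitly when it passes from ``$\mathcal B$ generates a contraction semigroup'' to $\|\mathcal T(s)\|\le 1$.
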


\begin{proof}
Let $\mathcal T(s)=\mathcal S_\delta((\delta s)^{1/\delta})$.
By Lemma~\ref{lem:TisC0_combined}, $\mathcal T$ is a classical $C_0$--semigroup.
Let $\mathcal B$ be its generator.
By Theorem~\ref{thm:generators_combined},
\[
\mathcal B=\mathcal A_\delta
\quad\text{and}\quad
D(\mathcal B)=D(\mathcal A_\delta).
\]
Hence $\mathcal B$ is densely defined, dissipative, and satisfies
$\mathrm{Ran}(\lambda I-\mathcal B)=H$.
By Theorem~\ref{thm:lumer_phillips_combined}, $\mathcal B$ generates a contraction
semigroup, so $\|\mathcal T(s)\|\le1$ for all $s\ge0$.
Finally,
\[
\mathcal S_\delta(t)=\mathcal T(\Psi(t))=\mathcal T(t^\delta/\delta),
\]
which yields $\|\mathcal S_\delta(t)\|\le1$ for all $t\ge0$.
\end{proof}

\begin{remark}
Corollary~\ref{cor:conformable_LP} provides a convenient stability criterion for
conformable Cauchy problems in Hilbert spaces.
In applications, dissipativity is typically obtained by combining a conformable
integration by parts identity with the boundary condition encoded in
$H_{\delta,0}^1(\R_+)$.
\end{remark}

\section{Linear dynamics under conformable time reparametrization}\label{sec5}

Let $X$ be a separable Banach space and fix $\delta\in(0,1]$.
Let
\(
\mathcal S_\delta=\{\mathcal S_\delta(t)\}_{t\ge0}\subset\mathcal L(X)
\)
be a $C_0$--$\delta$--semigroup in the sense of Section~\ref{sec2}.
A central point in conformable dynamics is that the parameter $t$ does not create
new trajectories: it only alters the \emph{pace} at which a given orbit is
traversed.
This simple observation allows one to transfer orbit-based properties
(hypercyclicity, density of periodic points, and chaos) between conformable and
classical semigroups, and to exploit established criteria from the classical
theory \cite{desch1997,pazy1983,engel2000}.
In particular, the conformable setting inherits the Desch--Schappacher--Webb
criterion once the time-change mechanism is made explicit.

\subsection{The conformable clock and the associated classical semigroup}

Define the \emph{conformable clock} by
\[
\Psi:[0,\infty)\to[0,\infty),\qquad \Psi(t)=\frac{t^\delta}{\delta}.
\]
Then $\Psi$ is continuous, strictly increasing, onto, and its inverse is
\[
\Psi^{-1}(s)=(\delta s)^{1/\delta},\qquad s\ge0.
\]
To $\mathcal S_\delta$ we associate the operator family
\begin{equation}\label{eq:defT}
\mathcal T(s)
:=
\mathcal S_\delta\!\bigl(\Psi^{-1}(s)\bigr)
=
\mathcal S_\delta\!\bigl((\delta s)^{1/\delta}\bigr),
\qquad s\ge0,
\end{equation}
so that, equivalently,
\begin{equation}\label{eq:conjugacy_time}
\mathcal S_\delta(t)=\mathcal T(\Psi(t)),\qquad t\ge0.
\end{equation}

\begin{lemma}\label{lem:T_C0}
The family $\{\mathcal T(s)\}_{s\ge0}$ is a (classical) $C_0$--semigroup on $X$.
\end{lemma}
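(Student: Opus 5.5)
This is the same statement as Lemma~\ref{lem:TisC0_combined}, since \eqref{eq:defT} defines exactly the family considered in Section~\ref{sec:mild-lumer}. The plan is either to invoke that lemma directly or to repeat its three-step verification. We check the three defining properties of a classical $C_0$--semigroup in turn: $\mathcal T(0)=I$, the semigroup law $\mathcal T(s+r)=\mathcal T(s)\mathcal T(r)$, and strong continuity at $0$.

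\emph{Identity at the origin.} Since $\Psi^{-1}(0)=0$, we get $\mathcal T(0)=\mathcal S_\delta(0)=I$ from the first axiom of a conformable $\delta$--semigroup.

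\emph{Semigroup law.} For $s,r\ge0$ we have $\Psi^{-1}(s+r)=(\delta s+\delta r)^{1/\delta}$. Apply the composition rule of the definition with $\rho:=\delta s$ and $q:=\delta r$, both nonnegative:
\[
\mathcal S_\delta\bigl((\rho+q)^{1/\delta}\bigr)=\mathcal S_\delta\bigl(\rho^{1/\delta}\bigr)\mathcal S_\delta\bigl(q^{1/\delta}\bigr).
\]
This gives $\mathcal T(s+r)=\mathcal T(s)\mathcal T(r)$ directly. The only point needing care is the bookkeeping: the defining law is written in the variables $\rho^{1/\delta}$ rather than $t$, and the factor $\delta$ in $\Psi^{-1}$ is absorbed into $\rho,q$. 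Note in passing that commutativity of $\mathcal T(s)$ and $\mathcal T(r)$ follows from the symmetry of $\rho+q$.

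\emph{Strong continuity.} Fix $x\in X$. As $s\to0^+$, $\Psi^{-1}(s)=(\delta s)^{1/\delta}\to0^+$ because $\Psi^{-1}$ is continuous and vanishes at $0$. The $C_0$ hypothesis $\mathcal S_\delta(t)x\to x$ as $t\to0^+$ then gives $\mathcal T(s)x\to x$. Strong continuity at $0$, together with the semigroup law, is the standard definition of a $C_0$--semigroup, so strong continuity on all of $[0,\infty)$ and the exponential bound $\|\mathcal T(s)\|\le Me^{\omega s}$ follow from classical theory \cite{pazy1983,engel2000}. No step is a genuine obstacle; the mildly delicate point is only the reindexing in the semigroup law.
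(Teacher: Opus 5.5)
Your proof is correct and matches the paper's argument, which is the same three checks: $\mathcal T(0)=I$, the semigroup law via the composition rule applied to $\delta s,\delta r$ (the paper writes this as $(t_1^\delta+t_2^\delta)^{1/\delta}$ with $t_1=\Psi^{-1}(s)$, $t_2=\Psi^{-1}(r)$), and strong continuity at $0$ from $\Psi^{-1}(s)\to0^+$. Your remark that continuity at $0$ already gives continuity everywhere and the exponential bound, by standard $C_0$--semigroup theory, is a harmless addition that the paper leaves implicit.
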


\begin{proof}
\emph{(i) Identity at $0$.}
Since $\Psi^{-1}(0)=0$ and $\mathcal S_\delta(0)=I$, we get
$\mathcal T(0)=\mathcal S_\delta(0)=I$.

\smallskip
\emph{(ii) Semigroup law.}
Let $s,r\ge0$ and set $t_1=\Psi^{-1}(s)=(\delta s)^{1/\delta}$ and
$t_2=\Psi^{-1}(r)=(\delta r)^{1/\delta}$.
Then $t_1^\delta=\delta s$ and $t_2^\delta=\delta r$, hence
\[
\Psi^{-1}(s+r)=(\delta(s+r))^{1/\delta}=(t_1^\delta+t_2^\delta)^{1/\delta}.
\]
Using the defining composition rule for a $C_0$--$\delta$--semigroup,
\[
\mathcal S_\delta\!\bigl((t_1^\delta+t_2^\delta)^{1/\delta}\bigr)
=\mathcal S_\delta(t_1)\mathcal S_\delta(t_2),
\]
we obtain
\[
\mathcal T(s+r)
=\mathcal S_\delta(\Psi^{-1}(s+r))
=\mathcal S_\delta(t_1)\mathcal S_\delta(t_2)
=\mathcal T(s)\mathcal T(r).
\]

\smallskip
\emph{(iii) Strong continuity at $0$.}
Fix $x\in X$.
As $s\to0^+$, $\Psi^{-1}(s)=(\delta s)^{1/\delta}\to0^+$.
By strong continuity of $\mathcal S_\delta$ at $0$,
\[
\|\mathcal T(s)x-x\|
=
\|\mathcal S_\delta(\Psi^{-1}(s))x-x\|
\longrightarrow 0.
\]
Thus $\mathcal T$ is a $C_0$--semigroup.
\end{proof}

\begin{remark}[Orbit identity]\label{rem:orbit_identity}
From \eqref{eq:conjugacy_time} and the fact that $\Psi$ is a bijection of $[0,\infty)$,
we infer that for each $x\in X$,
\[
\{\mathcal S_\delta(t)x:\ t\ge0\}
=
\{\mathcal T(s)x:\ s\ge0\}.
\]
Hence conformable reparametrization changes only the parametrization of the orbit,
not the set of points visited along it.
\end{remark}

\subsection{Orbit-type notions for conformable semigroups}

We now introduce the dynamical notions used in the sequel.

\begin{definition}[$\delta$--hypercyclicity and $\delta$--chaos]\label{def:delta_hc_chaos}
\leavevmode
\begin{enumerate}
\item
The semigroup $\mathcal S_\delta$ is called \emph{$\delta$--hypercyclic} if there exists
$x\in X$ such that the orbit $\{\mathcal S_\delta(t)x:\ t\ge0\}$ is dense in $X$.
Such an $x$ is called a $\delta$--hypercyclic vector.

\item
It is called \emph{$\delta$--chaotic} if it is $\delta$--hypercyclic and its set of
$\delta$--periodic points
\[
X_p^\delta
:=
\{x\in X:\ \exists\,t>0\ \text{such that}\ \mathcal S_\delta(t)x=x\}
\]
is dense in $X$.
\end{enumerate}
\end{definition}

In the spirit of the classical semigroup dynamics framework \cite{desch1997}, we also set
\[
X_0^\delta
:=
\Bigl\{x\in X:\ \lim_{t\to\infty}\mathcal S_\delta(t)x=0\Bigr\},
\]
and
\[
X_\infty^\delta
:=
\Bigl\{x\in X:\ \forall\varepsilon>0\ \exists\,y\in X,\ t>0\ \text{such that}\
\|y\|<\varepsilon\ \text{and}\ \|\mathcal S_\delta(t)y-x\|<\varepsilon\Bigr\}.
\]
The set $X_\infty^\delta$ encodes the ability to approximate arbitrary target states
from arbitrarily small initial data.

\subsection{Dynamical sets are invariant under the conformable clock}

Let $X_0$, $X_\infty$ and $X_p$ denote the corresponding sets for the classical semigroup
$\mathcal T$:
\begin{align*}
X_0&=\Bigl\{x\in X:\lim_{s\to\infty}\mathcal T(s)x=0\Bigr\},\\
X_\infty&=\Bigl\{x\in X:\forall\varepsilon>0\ \exists\,y\in X,\ s>0:
\|y\|<\varepsilon,\ \|\mathcal T(s)y-x\|<\varepsilon\Bigr\},\\
X_p&=\Bigl\{x\in X:\exists\,s>0\ \text{with}\ \mathcal T(s)x=x\Bigr\}.
\end{align*}

\begin{lemma}[Invariance of dynamical sets]\label{lem:invariance_sets}
With $\mathcal T$ defined by \eqref{eq:defT}, one has
\[
X_0^\delta=X_0,\qquad X_\infty^\delta=X_\infty,\qquad X_p^\delta=X_p.
\]
In particular, $\mathcal S_\delta$ is $\delta$--hypercyclic (resp.\ $\delta$--chaotic)
if and only if $\mathcal T$ is hypercyclic (resp.\ chaotic) in the classical sense.
\end{lemma}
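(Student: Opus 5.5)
The plan is to use only the conjugacy relation \eqref{eq:conjugacy_time}, $\mathcal S_\delta(t)=\mathcal T(\Psi(t))$, together with the fact that $\Psi:[0,\infty)\to[0,\infty)$ is a strictly increasing continuous bijection. In particular, $\Psi$ maps $(0,\infty)$ onto $(0,\infty)$, and $\Psi(t)\to\infty$ if and only if $t\to\infty$. Each of the three set identities will be proved by a direct substitution $s=\Psi(t)$, and hence $t=\Psi^{-1}(s)$, in the defining condition. We then deduce the statements about hypercyclicity and chaos from Remark~\ref{rem:orbit_identity}.

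\emph{Periodic points and $X_\infty$.} If $x\in X_p^\delta$, choose $t>0$ with $\mathcal S_\delta(t)x=x$. Setting $s=\Psi(t)>0$ gives $\mathcal T(s)x=x$, so $x\in X_p$. Conversely, if $\mathcal T(s)x=x$ with $s>0$, then $t=\Psi^{-1}(s)>0$ satisfies $\mathcal S_\delta(t)x=\mathcal T(s)x=x$; the positivity of $\Psi^{-1}(s)$ for $s>0$ is the only point to note. For $X_\infty^\delta=X_\infty$ we argue the same way. Given $\varepsilon>0$, a pair $(y,t)$ with $t>0$, $\|y\|<\varepsilon$ and $\|\mathcal S_\delta(t)y-x\|<\varepsilon$ produces the pair $(y,\Psi(t))$, which witnesses the condition for $\mathcal T$. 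The inverse substitution gives the reverse inclusion.

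\emph{Limits at infinity.} For $X_0^\delta=X_0$ we need
\[
\lim_{t\to\infty}\mathcal T(\Psi(t))x=0\iff\lim_{s\to\infty}\mathcal T(s)x=0.
\]
Since $\Psi$ and $\Psi^{-1}$ are increasing bijections tending to infinity, this follows from the $\varepsilon$--$R$ definition of the limit. If $\|\mathcal T(s)x\|<\varepsilon$ for all $s\ge R$, then $\|\mathcal S_\delta(t)x\|<\varepsilon$ for all $t\ge\Psi^{-1}(R)$. Symmetrically, if $\|\mathcal S_\delta(t)x\|<\varepsilon$ for $t\ge R'$, then $\|\mathcal T(s)x\|<\varepsilon$ for $s\ge\Psi(R')$. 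This composition-of-limits step is the only one that uses monotonicity and unboundedness of $\Psi$, not just bijectivity, so I regard it as the (mild) main point to write carefully.

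\emph{Dynamical consequences.} By Remark~\ref{rem:orbit_identity}, the orbits $\{\mathcal S_\delta(t)x\}_{t\ge0}$ and $\{\mathcal T(s)x\}_{s\ge0}$ coincide as sets. Hence $x$ is a $\delta$--hypercyclic vector exactly when it is a hypercyclic vector for $\mathcal T$, so $\mathcal S_\delta$ is $\delta$--hypercyclic iff $\mathcal T$ is hypercyclic. Combining this with $X_p^\delta=X_p$, the density of periodic points is the same condition in both settings. Therefore $\mathcal S_\delta$ is $\delta$--chaotic iff $\mathcal T$ is chaotic in the classical (Devaney) sense, which completes the argument.
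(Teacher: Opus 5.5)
Your proof is correct and follows the paper's argument essentially step for step: substitute $s=\Psi(t)$ into the definitions of $X_p^\delta$, $X_\infty^\delta$ and $X_0^\delta$ using $\mathcal S_\delta(t)=\mathcal T(\Psi(t))$, then combine the orbit identity of Remark~\ref{rem:orbit_identity} with $X_p^\delta=X_p$ to get the hypercyclicity and chaos equivalences. Your explicit $\varepsilon$--$R$ argument for the limit at infinity uses $\Psi^{-1}(s)\to\infty$ as $s\to\infty$, so it is slightly more careful than the paper's Step~1, which cites only $\Psi(t)\to\infty$ and surjectivity.
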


\begin{proof}
We use \eqref{eq:conjugacy_time} and the fact that $\Psi$ is a homeomorphism of
$[0,\infty)$ onto itself.

\smallskip
\noindent\emph{Step 1: invariance of $X_0^\delta$.}
For $x\in X$,
\[
\lim_{t\to\infty}\mathcal S_\delta(t)x=0
\ \Longleftrightarrow\
\lim_{t\to\infty}\mathcal T(\Psi(t))x=0.
\]
Since $\Psi(t)\to\infty$ as $t\to\infty$ and $\Psi([0,\infty))=[0,\infty)$,
this is equivalent to $\lim_{s\to\infty}\mathcal T(s)x=0$.
Hence $X_0^\delta=X_0$.

\smallskip
\noindent\emph{Step 2: invariance of periodic points.}
If $x\in X_p^\delta$, then $\mathcal S_\delta(t)x=x$ for some $t>0$, and thus
$\mathcal T(\Psi(t))x=x$ with $\Psi(t)>0$, so $x\in X_p$.
Conversely, if $x\in X_p$, there exists $s>0$ with $\mathcal T(s)x=x$; choosing
$t=\Psi^{-1}(s)$ yields
\[
\mathcal S_\delta(t)x=\mathcal T(\Psi(t))x=\mathcal T(s)x=x,
\]
so $x\in X_p^\delta$.
Therefore $X_p^\delta=X_p$.

\smallskip
\noindent\emph{Step 3: invariance of $X_\infty^\delta$.}
Fix $x\in X$ and $\varepsilon>0$.
Assume $x\in X_\infty^\delta$. Then there exist $y\in X$ and $t>0$ such that
$\|y\|<\varepsilon$ and $\|\mathcal S_\delta(t)y-x\|<\varepsilon$.
Since $\mathcal S_\delta(t)=\mathcal T(\Psi(t))$, we get
$\|\mathcal T(\Psi(t))y-x\|<\varepsilon$, hence $x\in X_\infty$.
Conversely, if $x\in X_\infty$, there exist $y\in X$ and $s>0$ such that
$\|y\|<\varepsilon$ and $\|\mathcal T(s)y-x\|<\varepsilon$.
Setting $t=\Psi^{-1}(s)$ yields
\[
\|\mathcal S_\delta(t)y-x\|
=
\|\mathcal T(\Psi(t))y-x\|
=
\|\mathcal T(s)y-x\|
<\varepsilon,
\]
so $x\in X_\infty^\delta$.
Thus $X_\infty^\delta=X_\infty$.

\smallskip
\noindent\emph{Step 4: hypercyclicity and chaos.}
By Remark~\ref{rem:orbit_identity}, orbit sets coincide for $\mathcal S_\delta$ and
$\mathcal T$, hence the existence of a dense orbit is equivalent for the two families.
Together with $X_p^\delta=X_p$, this yields the equivalence of chaos.
\end{proof}

\subsection{Infinitesimal generators and time reparametrization}

Let $\mathcal A_\delta$ denote the $\delta$--generator of $\mathcal S_\delta$ and let
$\mathcal B$ denote the classical generator of $\mathcal T$.

\begin{lemma}[Identification of generators]\label{lem:generator_identification}
One has
\[
D(\mathcal A_\delta)=D(\mathcal B),
\qquad
\mathcal A_\delta x=\mathcal Bx
\quad\text{for all }x\in D(\mathcal A_\delta).
\]
\end{lemma}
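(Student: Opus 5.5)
The plan is to prove the identity by transporting the conformable derivative along the clock $\Psi$ and then using the classical characterization of the generator through difference quotients. The argument of Theorem~\ref{thm:generators_combined} only compares the quotients $(\mathcal S_\delta(t)x-x)/\Psi(t)$ at the origin. The $\delta$--generator, however, is defined as $\lim_{t\to0^+}\mathcal D_t^\delta\mathcal S_\delta(t)x$, which is a limit of conformable derivatives taken at positive times. So I will first relate the conformable derivative of the orbit at a fixed $t>0$ to the classical derivative of $s\mapsto\mathcal T(s)x$ at $s=\Psi(t)$.

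\emph{Step 1 (chain rule along the clock).} Fix $x\in X$ and $t>0$. Put $k(h):=\Psi(t+h\,t^{1-\delta})-\Psi(t)$. The function $k$ is $C^1$ near $h=0$, strictly increasing, satisfies $k(0)=0$, and has $k'(0)=\Psi'(t)\,t^{1-\delta}=1$. By \eqref{eq:conjugacy_time},
\[
\frac{\mathcal S_\delta(t+h t^{1-\delta})x-\mathcal S_\delta(t)x}{h}
=\frac{\mathcal T(\Psi(t)+k(h))x-\mathcal T(\Psi(t))x}{k(h)}\cdot\frac{k(h)}{h}.
\]
Since $h\mapsto k(h)$ is a local homeomorphism at $0$ and $k(h)/h\to1$, the left-hand side converges as $h\to0$ if and only if $s\mapsto\mathcal T(s)x$ is differentiable at $\Psi(t)$. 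In that case
\[
\mathcal D_t^\delta\mathcal S_\delta(t)x=\frac{d}{ds}\mathcal T(s)x\Big|_{s=\Psi(t)}.
\]

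\emph{Step 2 ($D(\mathcal B)\subset D(\mathcal A_\delta)$).} Let $x\in D(\mathcal B)$. Standard semigroup theory gives that $s\mapsto\mathcal T(s)x$ is differentiable on $(0,\infty)$ with derivative $\mathcal T(s)\mathcal Bx$. By Step~1, $\mathcal D_t^\delta\mathcal S_\delta(t)x=\mathcal T(\Psi(t))\mathcal Bx$. By strong continuity (Lemma~\ref{lem:T_C0}) and $\Psi(t)\to0$, this tends to $\mathcal Bx$ as $t\to0^+$. Hence $x\in D(\mathcal A_\delta)$ and $\mathcal A_\delta x=\mathcal Bx$.

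\emph{Step 3 ($D(\mathcal A_\delta)\subset D(\mathcal B)$), the main obstacle.} Let $x\in D(\mathcal A_\delta)$ with $y:=\mathcal A_\delta x$. By definition, $\mathcal D_t^\delta\mathcal S_\delta(t)x$ exists for all small $t>0$. By Step~1, $g(r):=\frac{d}{dr}\mathcal T(r)x$ exists for every $r\in(0,s_0)$, and $g(r)\to y$ as $r\to0^+$. The difficulty is to pass from this pointwise differentiability, without assuming continuity of $g$, to the existence of the classical limit $(\mathcal T(s)x-x)/s$. I will apply the mean value inequality for Banach-valued functions to $\varphi(r):=\mathcal T(r)x-ry$. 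This function is continuous on $[0,s]$ by Lemma~\ref{lem:T_C0} and differentiable on $(0,s)$ with $\varphi'=g-y$. The inequality gives
\[
\|\mathcal T(s)x-x-sy\|\le s\sup_{0<r<s}\|g(r)-y\|,
\]
so $(\mathcal T(s)x-x)/s\to y$. Thus $x\in D(\mathcal B)$ and $\mathcal Bx=y$.

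Combining Steps~2 and~3 yields $D(\mathcal A_\delta)=D(\mathcal B)$ and $\mathcal A_\delta=\mathcal B$ on this common domain. As a remark, one can also recover the quotient identity used in Theorem~\ref{thm:generators_combined}, namely $(\mathcal T(s)x-x)/s=(\mathcal S_\delta(t)x-x)/\Psi(t)$ with $s=\Psi(t)$. This shows that the generator can equivalently be defined by that quotient.
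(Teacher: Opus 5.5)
Your proof is correct, and it takes a genuinely different route from the paper. The paper's entire argument is the identity $(\mathcal T(s)x-x)/s=(\mathcal S_\delta(t)x-x)/\Psi(t)$ for $s=\Psi(t)$, plus the observation that $s\to0^+$ exactly when $t\to0^+$. This is your closing remark, used by the paper as the whole proof. From it the paper concludes that ``the limit defining $\mathcal A_\delta x$'' exists if and only if the one defining $\mathcal Bx$ does. That step tacitly treats $\mathcal A_\delta x$ as $\lim_{t\to0^+}(\mathcal S_\delta(t)x-x)/\Psi(t)$. The definition in Section~\ref{sec2} is different: it is $\lim_{t\to0^+}\mathcal D_t^\delta\mathcal S_\delta(t)x$, a limit of conformable derivatives taken at positive times.

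Your Step~1 supplies the missing link. Because $k'(0)=\Psi'(t)\,t^{1-\delta}=1$ and $k$ is a local homeomorphism at $0$, the conformable derivative at $t$ equals the two-sided classical derivative of $s\mapsto\mathcal T(s)x$ at $s=\Psi(t)$. Steps~2 and~3 then handle the two inclusions honestly:
\begin{itemize}
\item Step~2 uses $\frac{d}{ds}\mathcal T(s)x=\mathcal T(s)\mathcal Bx$ together with strong continuity.
\item Step~3 uses the Banach-valued mean value inequality. It turns pointwise differentiability on $(0,s_0)$ plus convergence of the derivative into convergence of the difference quotient at $0$.
\end{itemize}
A slightly shorter alternative for Step~3 is available. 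Differentiability at $r>0$ forces $\mathcal T(r)x\in D(\mathcal B)$ with $\mathcal B\mathcal T(r)x=g(r)$. Since $\mathcal T(r)x\to x$ and $g(r)\to y$, closedness of $\mathcal B$ gives the conclusion.

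In short, the paper's version is shorter, but it is only valid if the $\delta$--generator is defined by the difference quotient at the origin. Yours proves the lemma for the definition actually stated in the paper. As a by-product, it shows that the two candidate definitions of the $\delta$--generator coincide.
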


\begin{proof}
Fix $x\in X$ and relate $s$ and $t$ by $s=\Psi(t)=t^\delta/\delta$, i.e.\
$t=\Psi^{-1}(s)=(\delta s)^{1/\delta}$.
Then
\[
\frac{\mathcal T(s)x-x}{s}
=
\frac{\mathcal S_\delta(t)x-x}{t^\delta/\delta}.
\]
Letting $s\to0^+$ is equivalent to letting $t\to0^+$, and the difference quotients
coincide for corresponding values.
Hence the limit defining $\mathcal Bx$ exists if and only if the limit defining
$\mathcal A_\delta x$ exists, and the two values are equal when they exist.
\end{proof}

\begin{remark}\label{rem:generator_dynamics}
Lemma~\ref{lem:generator_identification} is the infinitesimal counterpart of the orbit
identity.
In particular, spectral conditions used in linear dynamics (for instance those appearing
in chaos criteria) may be verified for $\mathcal A_\delta$ exactly as in the classical
generator framework.
\end{remark}

\subsection{A Desch--Schappacher--Webb criterion in conformable time}

We now formulate a chaoticity criterion for $C_0$--$\delta$--semigroups obtained by
transporting the classical theorem of Desch--Schappacher--Webb \cite{desch1997} through
the conformable clock.

\begin{theorem}[Desch--Schappacher--Webb criterion for $C_0$--$\delta$--semigroups]
\label{thm:DSW_delta}
Let $X$ be separable and let $\mathcal S_\delta$ be a $C_0$--$\delta$--semigroup on $X$
with $\delta$--generator $\mathcal A_\delta$.
Assume that there exist:
\begin{itemize}
\item an open set $V\subset\sigma_p(\mathcal A_\delta)$ such that $V\cap i\R\neq\emptyset$;
\item for each $\lambda\in V$, an eigenvector $x_\lambda\in X\setminus\{0\}$ satisfying
$\mathcal A_\delta x_\lambda=\lambda x_\lambda$;
\item for every $\phi\in X^\ast$, the scalar map
\(
\lambda\mapsto \langle \phi, x_\lambda\rangle
\)
is analytic on $V$;
\item and if $\langle \phi, x_\lambda\rangle\equiv 0$ on $V$, then $\phi=0$.
\end{itemize}
Then $\mathcal S_\delta$ is $\delta$--chaotic (and hence $\delta$--hypercyclic).
\end{theorem}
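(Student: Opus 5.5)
The plan is to transport the classical Desch--Schappacher--Webb theorem \cite{desch1997} through the conformable clock, with no new spectral analysis. Let $\mathcal T(s)=\mathcal S_\delta(\Psi^{-1}(s))$ be the family defined in \eqref{eq:defT}. By Lemma~\ref{lem:T_C0}, $\mathcal T$ is a classical $C_0$--semigroup on the separable space $X$; let $\mathcal B$ be its generator. Lemma~\ref{lem:generator_identification} gives $D(\mathcal B)=D(\mathcal A_\delta)$ and $\mathcal B=\mathcal A_\delta$ there. Hence $\sigma_p(\mathcal B)=\sigma_p(\mathcal A_\delta)$, and every eigenvector $x_\lambda$ of $\mathcal A_\delta$ satisfies $\mathcal Bx_\lambda=\lambda x_\lambda$.

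Next I check that the hypotheses of the classical criterion hold verbatim for $\mathcal B$. The set $V$ is open and contained in $\sigma_p(\mathcal B)$, and $V\cap i\R\neq\emptyset$. For each $\phi\in X^\ast$ the map $\lambda\mapsto\langle\phi,x_\lambda\rangle$ is analytic on $V$. Finally, the family $\{x_\lambda\}_{\lambda\in V}$ is total, in the sense that $\langle\phi,x_\lambda\rangle\equiv0$ on $V$ forces $\phi=0$. These conditions involve only the operator, the eigenvectors and the dual pairing, and none of these is affected by the time change. The classical Desch--Schappacher--Webb theorem therefore yields that $\mathcal T$ is chaotic, meaning it is hypercyclic and has a dense set $X_p$ of periodic points.

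Last, I return to conformable time. Lemma~\ref{lem:invariance_sets} states that $\mathcal S_\delta$ is $\delta$--chaotic if and only if $\mathcal T$ is chaotic. This rests on Remark~\ref{rem:orbit_identity}, which says the orbits of $\mathcal S_\delta$ and $\mathcal T$ coincide as sets, and on the identity $X_p^\delta=X_p$. So $\mathcal S_\delta$ is $\delta$--chaotic, and $\delta$--hypercyclicity is part of the definition.

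The main obstacle is making sure the version of the classical theorem being invoked matches these hypotheses exactly. In \cite{desch1997} one usually asks, for instance, that the eigenvector map be weakly analytic and the totality condition be stated on $V$, and sometimes a connected component is used. I would first cite the theorem in the form where weak analyticity plus the totality condition suffice; by Dunford's theorem, weak analyticity also gives strong analyticity of $\lambda\mapsto x_\lambda$. If the reference instead needs $V$ connected, I would replace $V$ by its connected component meeting $i\R$. Totality on that component follows from the identity theorem only when $V$ is connected, so I would state this reduction explicitly, or assume $V$ connected if needed. Everything else in the proof is immediate.
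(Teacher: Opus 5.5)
Your route is the same as the paper's. You pass to $\mathcal T(s)=\mathcal S_\delta(\Psi^{-1}(s))$, identify its generator with $\mathcal A_\delta$ via Lemma~\ref{lem:generator_identification}, apply the classical Desch--Schappacher--Webb theorem to $\mathcal T$, and return to conformable time with Lemma~\ref{lem:invariance_sets}.

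The connectedness point you flag at the end is not a quirk of particular versions of the classical theorem. Without connectedness the statement as written is false. So no version exists in which weak analyticity and totality on an arbitrary open $V$ suffice, and passing to the component meeting $i\R$ genuinely loses totality.

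\emph{Counterexample.} Let $X=\ell^2\oplus\ell^2$ and $A=B\oplus(B+10I)$, where $B$ is the backward shift. Put $\mathcal S_\delta(t):=e^{\Psi(t)A}$, which is a $C_0$--$\delta$--semigroup with $\delta$--generator $A$. Take $V=\mathbb D\cup D(10,1)$, the union of the open unit disk and the open disk of radius $1$ about $10$. Set $x_\lambda=((\lambda^n)_{n\ge0},0)$ for $\lambda\in\mathbb D$ and $x_\lambda=(0,((\lambda-10)^n)_{n\ge0})$ for $\lambda\in D(10,1)$.

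\begin{itemize}
\item All four hypotheses hold: $0\in V\cap i\R$, the pairings are power series and hence analytic, and totality follows from uniqueness of power series on each disk.
\item Since $\|e^{-sB}\|\le e^{s}$, we get $\|e^{s(B+10I)}z\|\ge e^{9s}\|z\|$ for all $z\in\ell^2$.
\item Hence for $y=(y_1,y_2)$ the second component of the orbit of $e^{sA}$ either vanishes identically or stays outside the open ball of radius $\|y_2\|$.
\item So $\mathcal T(s)=e^{sA}$ is not hypercyclic, and $\mathcal S_\delta$ is not $\delta$--chaotic.
\end{itemize}

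The correct repair is your last fallback. Add the hypothesis that $V$ is connected, or more generally that every connected component of $V$ meets $i\R$. Then the classical criterion applies verbatim and your argument is complete. The paper's proof has the same omission: it simply asserts that the assumptions are exactly those required by the classical criterion.
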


\begin{proof}
Let $\mathcal T$ be defined by \eqref{eq:defT}.
By Lemma~\ref{lem:T_C0}, $\mathcal T$ is a $C_0$--semigroup, and by
Lemma~\ref{lem:generator_identification} its generator coincides with $\mathcal A_\delta$
(with the same domain).
Therefore the assumptions are exactly those required by the classical
Desch--Schappacher--Webb criterion for $\mathcal T$ \cite{desch1997}.
It follows that $\mathcal T$ is chaotic in the classical sense.

Finally, Lemma~\ref{lem:invariance_sets} shows that both the existence of a dense orbit
and the density of periodic points are invariant under the clock change
$s=\Psi(t)$.
Consequently, $\mathcal S_\delta$ is $\delta$--chaotic.
\end{proof}

\begin{remark}[Dynamical meaning of the clock change]\label{rem:clock_dynamics}
The identity $\mathcal S_\delta(t)=\mathcal T(\Psi(t))$ implies that orbit-based
properties depend only on the orbit sets, hence they remain unchanged under conformable
reparametrization.
As a result, chaoticity of $\mathcal S_\delta$ can be checked using classical criteria
for $\mathcal T$ (equivalently for $\mathcal A_\delta$), and conversely.
\end{remark}

\section{Applications}\label{sec7}

We illustrate the abstract results on two model problems.
Both examples rely on the same structural mechanism: a conformable evolution posed
on a weighted space can be reduced to a classical evolution by a nonlinear change
of the \emph{space} variable.
In our setting, this reduction is implemented by the unitary map $U$ (introduced
below) which identifies the weighted space with a standard $L^2$ space.
As a consequence, the conformable solution operator is not only a conformable
semigroup in the sense of Section~\ref{sec2}, but is also \emph{conjugate} to a
classical $C_0$--semigroup.
For orbit-based dynamics this is decisive: dense orbits and periodic points are
preserved under such conjugacies, so chaoticity of the conformable semigroup is
inherited directly from the classical one, for instance via the
Desch--Schappacher--Webb theory \cite{desch1997}.

\subsection{A conformable diffusion--transport flow on $X^\delta=L^{2,\delta}(0,1)$}\label{subsec:app1}

Fix $\delta\in(0,1]$.
We denote by $\partial_x^\delta$ the conformable derivative with respect to the
space variable $x>0$, namely
\[
\partial_x^\delta u(x):=\lim_{\varepsilon\to 0}\frac{u\bigl(x+\varepsilon x^{1-\delta}\bigr)-u(x)}{\varepsilon},
\]
whenever the limit exists.
For $u\in C^1((0,1))$, this derivative is local and satisfies the pointwise
representation (see \cite{khalil2014,abdeljawad2015,abdeljawad2016})
\begin{equation}\label{eq:conf_repr_final}
\partial_x^\delta u(x)=x^{1-\delta}u'(x),\qquad x\in(0,1).
\end{equation}

We work on the conformable Lebesgue space
\[
X^\delta:=L^{2,\delta}(0,1)
=\Bigl\{f:(0,1)\to\mathbb C:\ \int_0^1 |f(x)|^2\,x^{\delta-1}\,dx<\infty\Bigr\},
\]
endowed with the norm
\(
\|f\|_{X^\delta}^2=\int_0^1 |f(x)|^2 x^{\delta-1}\,dx.
\)
This is the natural Hilbert framework associated with the conformable weight
$x^{\delta-1}dx$.

\begin{lemma}[Spatial clock change and unitary transfer]\label{lem:spatial_unitary}
Define the change of variables
\[
\xi=x^\delta\in(0,1),\qquad x=\xi^{1/\delta},
\]
and the linear map $U:X^\delta\to L^2(0,1)$ by
\[
(Uf)(\xi):=\delta^{-1/2}\,f(\xi^{1/\delta}),\qquad \xi\in(0,1).
\]
Then $U$ is unitary. In particular, for every $f\in X^\delta$,
\begin{equation}\label{eq:measure_change_final}
\int_0^1 |f(x)|^2\,x^{\delta-1}\,dx
=
\int_0^1 |(Uf)(\xi)|^2\,d\xi.
\end{equation}
Moreover, if $u(\cdot,t)$ is sufficiently smooth and we set
\[
w(\xi,t):=u(\xi^{1/\delta},t),
\]
then for each $t\ge 0$ and every $\xi\in(0,1)$,
\begin{equation}\label{eq:conf_to_classical_final}
\partial_x^\delta u(\xi^{1/\delta},t)=\delta\,\partial_\xi w(\xi,t),
\qquad
\partial_x^\delta\!\bigl(\partial_x^\delta u\bigr)(\xi^{1/\delta},t)=\delta^2\,\partial_{\xi\xi} w(\xi,t).
\end{equation}
\end{lemma}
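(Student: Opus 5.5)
The plan is to treat the three claims in \eqref{eq:measure_change_final} and \eqref{eq:conf_to_classical_final} separately, exactly as in the proof of Proposition~\ref{prop:isometryLp}. The only change is that the clock is now $\xi=x^\delta$ instead of $\Psi(x)=x^\delta/\delta$. This accounts for the normalising factor $\delta^{-1/2}$ in $U$.

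\emph{Step 1: isometry.} The map $x\mapsto x^\delta$ is a strictly increasing $C^1$-diffeomorphism of $(0,1)$ onto itself, with $d\xi=\delta x^{\delta-1}\,dx$. Hence $x^{\delta-1}\,dx=\delta^{-1}\,d\xi$. For measurable $f$ this gives
\[
\int_0^1|f(x)|^2x^{\delta-1}\,dx=\frac1\delta\int_0^1|f(\xi^{1/\delta})|^2\,d\xi=\int_0^1|(Uf)(\xi)|^2\,d\xi .
\]
This is \eqref{eq:measure_change_final}. It shows that $U$ is well defined from $X^\delta$ into $L^2(0,1)$, and it is clearly linear and norm preserving. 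Because $x\mapsto x^\delta$ and its inverse map null sets to null sets, $U$ respects a.e.\ equivalence classes.

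\emph{Step 2: surjectivity, hence unitarity.} For $g\in L^2(0,1)$ I will set $f(x):=\delta^{1/2}g(x^\delta)$. The same substitution read backwards shows $f\in X^\delta$ with $Uf=g$. So $U$ is a surjective linear isometry between Hilbert spaces. By polarization it preserves inner products, and therefore it is unitary with $U^{-1}g=\delta^{1/2}g(\cdot^{\,\delta})$.

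\emph{Step 3: derivative identities.} Fix $t$ and assume $w(\cdot,t)\in C^2(0,1)$; this is what ``sufficiently smooth'' should mean. Then $u(x,t)=w(x^\delta,t)$ lies in $C^2(0,1)$, and by the chain rule $\partial_xu(x,t)=\delta x^{\delta-1}\partial_\xi w(x^\delta,t)$. The representation \eqref{eq:conf_repr_final} then gives
\[
\partial_x^\delta u(x,t)=x^{1-\delta}\partial_xu(x,t)=\delta\,\partial_\xi w(x^\delta,t).
\]
Evaluating at $x=\xi^{1/\delta}$ yields the first identity. For the second, I apply the first identity to $v:=\partial_x^\delta u=\delta\,\partial_\xi w(x^\delta,t)$. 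This function is $C^1$ in $x$, and the role of $w$ is now played by $\delta\,\partial_\xi w$. We obtain $\partial_x^\delta v=\delta\cdot\delta\,\partial_{\xi\xi}w$.

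The only delicate point is this regularity bookkeeping. One must make sure that $\partial_x^\delta u$ is again $C^1$ on the open interval $(0,1)$, so that \eqref{eq:conf_repr_final} may be applied a second time. Requiring $w\in C^2$ on $(0,1)$ settles this, since $x\mapsto x^\delta$ is smooth away from $0$. The endpoint $x=0$ never enters, because all identities are pointwise on $(0,1)$.
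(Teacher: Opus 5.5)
Your proposal is correct and follows essentially the same route as the paper. The isometry comes from the substitution $\xi=x^\delta$ with $x^{\delta-1}\,dx=\delta^{-1}d\xi$, surjectivity from the explicit inverse $(U^{-1}g)(x)=\delta^{1/2}g(x^\delta)$, and the derivative identities from the chain rule combined with $\partial_x^\delta u=x^{1-\delta}u'$, applied twice. Your explicit assumption $w(\cdot,t)\in C^2(0,1)$ is more careful than the paper's hypothesis $u(\cdot,t)\in C^1((0,1))$: the paper applies the representation a second time without arranging $C^1$ regularity of $\partial_x^\delta u$.
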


\begin{proof}
\emph{Unitarity.}
With $\xi=x^\delta$ we have $d\xi=\delta x^{\delta-1}\,dx$, hence
$x^{\delta-1}\,dx=\delta^{-1}d\xi$.
Therefore, for $f\in X^\delta$,
\[
\|f\|_{X^\delta}^2
=
\int_0^1 |f(x)|^2\,x^{\delta-1}\,dx
=
\frac1\delta\int_0^1 \bigl|f(\xi^{1/\delta})\bigr|^2\,d\xi
=
\int_0^1 \bigl|\delta^{-1/2}f(\xi^{1/\delta})\bigr|^2\,d\xi
=
\|Uf\|_{L^2(0,1)}^2,
\]
which proves \eqref{eq:measure_change_final} and shows that $U$ is an isometry.
The inverse map is explicit:
\[
(U^{-1}g)(x)=\delta^{1/2}\,g(x^\delta),\qquad x\in(0,1),
\]
hence $U$ is surjective and therefore unitary.

\medskip
\emph{Derivative identities.}
Assume $u(\cdot,t)\in C^1((0,1))$ and define $w(\xi,t)=u(\xi^{1/\delta},t)$.
Since $x=\xi^{1/\delta}$,
\[
\partial_\xi x=\frac{1}{\delta}\xi^{\frac1\delta-1}
=\frac{1}{\delta}x^{1-\delta}.
\]
By the chain rule,
\[
\partial_\xi w(\xi,t)
=
u_x(x,t)\,\partial_\xi x
=
u_x(\xi^{1/\delta},t)\,\frac{1}{\delta}\,x^{1-\delta}.
\]
Using \eqref{eq:conf_repr_final} we infer
\[
\partial_x^\delta u(\xi^{1/\delta},t)
=
x^{1-\delta}u_x(\xi^{1/\delta},t)
=
\delta\,\partial_\xi w(\xi,t),
\]
which is the first identity in \eqref{eq:conf_to_classical_final}.
Applying the same argument once more yields the second identity.
\end{proof}

\begin{proposition}[Unitary equivalence of conformable and classical generators]\label{prop:conjugacy_operator}
Let $a,b,c>0$.
Consider the operator $A_\delta$ on $X^\delta$ defined by
\[
(A_\delta f)(x):=a\,\partial_x^\delta\!\bigl(\partial_x^\delta f\bigr)(x)+b\,\partial_x^\delta f(x)+c\,f(x),
\qquad x\in(0,1),
\]
with domain
\[
D(A_\delta):=\Bigl\{f\in X^\delta:\ f,\ \partial_x^\delta f,\ \partial_x^\delta(\partial_x^\delta f)\in X^\delta,
\ \text{and}\ \lim_{x\downarrow 0}f(x)=0\Bigr\}.
\]
Define the classical operator $\widetilde A$ on $L^2(0,1)$ by
\[
(\widetilde A g)(\xi):=\widetilde a\,g''(\xi)+\widetilde b\,g'(\xi)+c\,g(\xi),
\qquad
\widetilde a:=a\delta^2,\quad \widetilde b:=b\delta,
\]
with domain
\[
D(\widetilde A):=\{g\in W^{2,2}(0,1):\ g(0)=0\}.
\]
Then, on the natural core where pointwise computations are justified, one has
\begin{equation}\label{eq:conjugacy_final}
\widetilde A = U A_\delta U^{-1}.
\end{equation}
In particular, if $\widetilde A$ generates a classical $C_0$--semigroup $\{S(t)\}_{t\ge 0}$ on $L^2(0,1)$,
then $A_\delta$ generates a conformable semigroup $\mathcal S_\delta=\{\mathcal S_\delta(t)\}_{t\ge0}$
on $X^\delta$ given by
\[
\mathcal S_\delta(t)=U^{-1}S(t)U,\qquad t\ge 0.
\]
\end{proposition}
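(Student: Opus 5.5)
The plan is to prove \eqref{eq:conjugacy_final} first by a direct computation on smooth functions, using the derivative identities \eqref{eq:conf_to_classical_final} of Lemma~\ref{lem:spatial_unitary}. I will then check that $U$ maps the domain $D(A_\delta)$ onto $D(\widetilde A)$, at least on the natural core. Finally, I will transport the semigroup by conjugation.

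\emph{Step 1: the formal identity.} Take $g\in C^2([0,1])$ with $g(0)=0$ and set $f:=U^{-1}g$, so that $f(x)=\delta^{1/2}g(x^\delta)$. With $w(\xi):=f(\xi^{1/\delta})=\delta^{1/2}g(\xi)$, the identities \eqref{eq:conf_to_classical_final} give
\[
\partial_x^\delta f(\xi^{1/\delta})=\delta^{3/2}g'(\xi),\qquad \partial_x^\delta(\partial_x^\delta f)(\xi^{1/\delta})=\delta^{5/2}g''(\xi).
\]
The second identity is obtained by applying the first one to $\partial_x^\delta f$, which is again $C^1$.

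Substituting into the definition of $A_\delta$ and applying $U$ divides by $\delta^{1/2}$ and evaluates at $x=\xi^{1/\delta}$:
\[
(UA_\delta U^{-1}g)(\xi)=a\delta^2g''(\xi)+b\delta g'(\xi)+cg(\xi)=(\widetilde A g)(\xi).
\]
This is exactly \eqref{eq:conjugacy_final} on smooth functions. The scalar factor $\delta^{\pm1/2}$ in $U$ is harmless because the operators are linear.

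\emph{Step 2: domains.} The boundary conditions match: $\lim_{x\downarrow0}f(x)=0$ holds if and only if $\lim_{\xi\downarrow0}g(\xi)=0$, since $x\mapsto x^\delta$ is a homeomorphism fixing $0$. The integrability conditions also match. By the isometry \eqref{eq:measure_change_final}, $\partial_x^\delta f\in X^\delta$ if and only if $\delta^{3/2}g'\in L^2(0,1)$, and similarly for the second derivative. Hence $f,\partial_x^\delta f,\partial_x^\delta\partial_x^\delta f\in X^\delta$ exactly when $g,g',g''\in L^2(0,1)$, that is, when $g\in W^{2,2}(0,1)$. To pass from smooth functions to the full domains, I will close the identity in graph norm. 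Smooth $g$ with $g(0)=0$ are dense in $D(\widetilde A)$ for the $W^{2,2}$ norm, and the computation above shows that $U$ is an isometry between the two graph norms, up to constants. Therefore $UD(A_\delta)=D(\widetilde A)$, where derivatives in $D(A_\delta)$ are read in the weak conformable sense of Remark~\ref{rem:sobolev_choice}.

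\emph{Step 3: semigroups.} If $\widetilde A$ generates $\{S(t)\}$, define $\mathcal S_\delta(t):=U^{-1}S(t)U$. Since $U$ is unitary, this family inherits the identity at $0$, the semigroup law and strong continuity. Its generator is $U^{-1}\widetilde AU=A_\delta$, with domain $U^{-1}D(\widetilde A)=D(A_\delta)$. This follows because difference quotients are conjugated: $U^{-1}\frac{S(t)-I}{t}U$.

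The conjugacy also preserves dense orbits and periodic points, which is what Section~\ref{sec7} needs. If one insists on the conformable composition law of Section~\ref{sec2} in time, the clock of \eqref{eq:conjugacy_time} can be applied on top: $t\mapsto U^{-1}S(\Psi(t))U$ is then a $C_0$--$\delta$--semigroup whose $\delta$--generator is again $A_\delta$, by Lemma~\ref{lem:generator_identification}.

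The main obstacle is Step 2. The identity \eqref{eq:conjugacy_final} is stated only ``on the natural core'', so the delicate point is justifying that the weak conformable derivatives correspond under $U$ to weak classical derivatives. This is also where the closure argument in graph norm is needed. I would handle it by testing against $C_c^\infty(0,1)$ functions and pulling back via $\xi=x^\delta$; the extra weight $x^{1-\delta}$ is cancelled by the Jacobian $\delta x^{\delta-1}$.
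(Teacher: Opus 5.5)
Your proposal is correct and is essentially the paper's argument. The paper also sets $f=U^{-1}g$, uses \eqref{eq:conf_to_classical_final} from Lemma~\ref{lem:spatial_unitary} to obtain $\partial_x^\delta f=\delta^{3/2}g'(x^\delta)$ and $\partial_x^\delta(\partial_x^\delta f)=\delta^{5/2}g''(x^\delta)$, applies $U$, and transports the semigroup by unitary conjugation. Your Step~2 domain matching goes beyond the paper, which only asserts the identity on the core where pointwise computations are justified. Note that it is the weak-derivative correspondence under $\xi=x^\delta$, not the graph-norm closure alone, that gives $UD(A_\delta)=D(\widetilde A)$.
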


\begin{proof}
Let $g\in D(\widetilde A)$ and set $f=U^{-1}g$, i.e.\ $f(x)=\delta^{1/2}g(x^\delta)$.
Write $\xi=x^\delta$, so that $f(x)=\delta^{1/2}g(\xi)$.
By Lemma~\ref{lem:spatial_unitary},
\[
\partial_x^\delta f(x)=\delta^{3/2}g'(\xi),
\qquad
\partial_x^\delta\!\bigl(\partial_x^\delta f\bigr)(x)=\delta^{5/2}g''(\xi).
\]
Therefore,
\[
(A_\delta f)(x)
=
\delta^{1/2}\Bigl((a\delta^2)g''(\xi)+(b\delta)g'(\xi)+c\,g(\xi)\Bigr).
\]
Applying $U$ and using $(Uf)(\xi)=\delta^{-1/2}f(\xi^{1/\delta})$ gives
\[
(UA_\delta U^{-1}g)(\xi)
=
(a\delta^2)g''(\xi)+(b\delta)g'(\xi)+c\,g(\xi)
=
(\widetilde A g)(\xi),
\]
which proves \eqref{eq:conjugacy_final}.
The semigroup relation follows from the standard correspondence under unitary equivalence
(see, e.g., \cite{engel2000,pazy1983}).
\end{proof}

\begin{theorem}[Chaoticity of the conformable diffusion--transport semigroup]\label{thm:conformable_example412}
Consider the conformable drift--diffusion equation on $(0,1)$
\begin{equation}\label{eq:conf_pde}
\begin{cases}
\partial_t u(x,t)
=
a\,\partial_x^\delta\!\bigl(\partial_x^\delta u\bigr)(x,t)
+
b\,\partial_x^\delta u(x,t)
+
c\,u(x,t),
& x\in(0,1),\ t\ge 0,\\[1.5mm]
u(0,t)=0, & t\ge 0,\\
u(x,0)=f(x), & x\in(0,1),
\end{cases}
\end{equation}
where $f\in X^\delta=L^{2,\delta}(0,1)$ and the parameters $a,b,c>0$ satisfy
\[
c<\frac{b^2}{2a}<1.
\]
Let $A_\delta$ be as in Proposition~\ref{prop:conjugacy_operator} and let
$\mathcal S_\delta=\{\mathcal S_\delta(t)\}_{t\ge 0}$ be the conformable semigroup on $X^\delta$
generated by $A_\delta$. Then $\mathcal S_\delta$ is chaotic on $X^\delta$.
In particular, there exists $f\in X^\delta$ such that the orbit
$\{\mathcal S_\delta(t)f:\ t\ge 0\}$ is dense in $X^\delta$, and the set of periodic points of
$\mathcal S_\delta$ is dense in $X^\delta$.

Moreover, under the spatial change of variables $\xi=x^\delta$ and $w(\xi,t)=u(\xi^{1/\delta},t)$,
the mild solution of \eqref{eq:conf_pde} satisfies
\[
u(\cdot,t)=\mathcal S_\delta(t)f
\quad\Longleftrightarrow\quad
w(\cdot,t)=S(t)\,(Uf),
\]
where $\{S(t)\}$ is the classical $C_0$--semigroup generated on $L^2(0,1)$ by $\widetilde A$.
\end{theorem}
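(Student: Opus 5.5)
The proof will transport a known classical chaos result through the unitary conjugacy of Proposition~\ref{prop:conjugacy_operator}. I will not redo any spectral analysis in the conformable variable.

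\emph{Step 1: classical chaos for $\widetilde A$.} The classical operator $\widetilde A g=\widetilde a g''+\widetilde b g'+cg$ on $L^2(0,1)$ with $g(0)=0$ is the drift--diffusion operator from the Desch--Schappacher--Webb paper \cite{desch1997} (their Example 4.12 type). Its chaoticity holds under a condition of the form $c<\widetilde b^2/(2\widetilde a)<1$. To use it, I first check that the hypothesis $c<b^2/(2a)<1$ is the one required after rescaling. This works because
\[
\frac{\widetilde b^2}{2\widetilde a}=\frac{b^2\delta^2}{2a\delta^2}=\frac{b^2}{2a},
\]
so the ratio is invariant under the spatial clock change. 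I then expect the main obstacle to sit here: verifying the hypotheses of the classical criterion, namely an open set $V\subset\sigma_p(\widetilde A)$ meeting $i\R$ and analytic eigenvectors total in $L^2(0,1)$. If I cannot cite this as a black box, I will do the computation directly. For $\lambda$ in a suitable region, the eigenfunctions have the form $g_\lambda(\xi)=e^{\mu_+\xi}-e^{\mu_-\xi}$, where
\[
\mu_\pm=\frac{-\widetilde b\pm\sqrt{\widetilde b^2-4\widetilde a(c-\lambda)}}{2\widetilde a}.
\]
These vanish at $0$. I will show that $g_\lambda\in W^{2,2}(0,1)$ depends analytically on $\lambda$ in an open disc centred near the imaginary axis, and I will deduce totality from the Hahn--Banach / identity-theorem argument of \cite{desch1997}. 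The inequality $c<\widetilde b^2/(2\widetilde a)$ is what makes the disc reach $i\R$. The inequality $\widetilde b^2/(2\widetilde a)<1$ reflects the weighted space setting of the original example, and I will track it carefully, since on the unweighted $L^2(0,1)$ the bounded interval makes this simpler.

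\emph{Step 2: transfer by conjugacy.} Once $S(t)=e^{t\widetilde A}$ is chaotic on $L^2(0,1)$, Proposition~\ref{prop:conjugacy_operator} gives $\mathcal S_\delta(t)=U^{-1}S(t)U$ with $U$ unitary (Lemma~\ref{lem:spatial_unitary}). Conjugation by a homeomorphism preserves both properties. If $g$ has dense $S$-orbit, then $U^{-1}g$ has orbit $U^{-1}\{S(t)g\}$, which is dense. The periodic points of $\mathcal S_\delta$ are exactly $U^{-1}$ of those of $S$, so they are also dense. This is the same orbit-set argument as in Lemma~\ref{lem:invariance_sets}, with a spatial instead of a temporal reparametrization. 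Here I must make sure that the conjugacy holds at the level of generators with full domains, not only on a core. I will use the closedness of both operators: $U$ maps $D(A_\delta)$ onto $D(\widetilde A)$ because the chain rule (\ref{eq:conf_to_classical_final}) transforms weighted $\partial_x^\delta$-regularity into $W^{2,2}$ regularity, and $f(0)=0$ into $g(0)=0$.

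\emph{Step 3: the solution correspondence.} From $u(\cdot,t)=U^{-1}S(t)Uf$ I get $(Uu(\cdot,t))(\xi)=\delta^{-1/2}u(\xi^{1/\delta},t)=S(t)Uf$. This gives $w(\cdot,t)=\delta^{1/2}S(t)(Uf)$. So the stated equivalence holds up to the harmless normalization constant $\delta^{1/2}$, which I will absorb by noting that $S(t)$ is linear, or equivalently by reading $w$ through $U$. The equivalence in both directions then follows from bijectivity of $U$.
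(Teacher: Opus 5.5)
Your plan follows the paper's proof step for step: cite the Desch--Schappacher--Webb drift--diffusion result for $\widetilde A$, note that $\widetilde b^2/(2\widetilde a)=b^2/(2a)$, and conjugate by the unitary $U$. It fails at the same point as the paper's Step~2. With domain $\{g\in W^{2,2}(0,1):\ g(0)=0\}$, the operator $\widetilde A$ does not generate a $C_0$--semigroup, so there is no $\{S(t)\}$ to which a chaos criterion could apply. On a bounded interval, one boundary condition is too few. For every $\lambda\in\mathbb{C}$, the solution $g_\lambda$ of $\widetilde a g''+\widetilde b g'+(c-\lambda)g=0$ with $g(0)=0$, $g'(0)=1$ is smooth on $[0,1]$. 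It is therefore a nonzero element of $D(\widetilde A)$ with $\widetilde A g_\lambda=\lambda g_\lambda$, so $\sigma_p(\widetilde A)=\mathbb{C}$. If $\widetilde A$ generated $S(t)$, then $S(t)g_\lambda=e^{\lambda t}g_\lambda$ would force $\|S(t)\|\ge e^{t\Re\lambda}$ for every $\lambda\in\mathbb{C}$, which is absurd. Likewise, $f_\lambda(x):=g_\lambda(x^\delta)$ lies in $D(A_\delta)$ and, by the chain rule, satisfies $A_\delta f_\lambda=\lambda f_\lambda$. So the ``conformable semigroup generated by $A_\delta$'' in the statement does not exist either. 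The obstacle you single out is therefore misplaced: the spectral hypotheses hold trivially on $(0,1)$, and what fails is generation, which the criterion presupposes.

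The picture in your Step~1 is the half-line picture: eigenvalues filling a region that meets $i\R$, with $c<\widetilde b^2/(2\widetilde a)<1$ governing this. The one-sided condition $u(0,t)=0$ belongs to the drift--diffusion example on $(0,\infty)$. There, integrability at infinity acts as a second boundary condition and confines $\sigma_p$ to a proper region of the plane. The clock $\xi=x^\delta$ maps $(0,1)$ onto $(0,1)$, not onto a half-line, so that result cannot be imported. Your remark that the bounded interval ``makes this simpler'' has it backwards. Nor can you close the gap by adding a condition at $\xi=1$ such as $g(1)=0$. That restores generation, but then $\widetilde A^\ast$ has eigenvalues, which already rules out hypercyclicity, since a hypercyclic semigroup needs an adjoint generator with empty point spectrum. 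A correct version has to be posed for $x\in(0,\infty)$ on $L^{2,\delta}(0,\infty)$. There the same $U$ is unitary onto $L^2(0,\infty)$, and the half-line theorem can be transported in whatever space and under whatever hypotheses it is actually proved. Finally, your Step~3 computation is right and more careful than the paper, whose last line drops a factor: $U\mathcal S_\delta(t)f=S(t)Uf$ gives $w(\cdot,t)=S(t)\bigl(\delta^{1/2}Uf\bigr)$, not $S(t)(Uf)$. This is moot, however, until a semigroup exists.
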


\begin{proof}
\emph{Step 1: Reduction to the classical evolution.}
Let $u$ be sufficiently regular and define $w(\xi,t)=u(\xi^{1/\delta},t)$ with $\xi=x^\delta$.
Lemma~\ref{lem:spatial_unitary} yields \eqref{eq:conf_to_classical_final}, hence
\eqref{eq:conf_pde} is equivalent to
\[
\partial_t w(\xi,t)
=
(a\delta^2)\,\partial_{\xi\xi}w(\xi,t)
+
(b\delta)\,\partial_\xi w(\xi,t)
+
c\,w(\xi,t),
\qquad \xi\in(0,1),\ t\ge 0,
\]
with $w(0,t)=0$ and $w(\cdot,0)=Uf$.

\medskip
\emph{Step 2: Chaoticity in the classical picture.}
Let $\widetilde A$ be as in Proposition~\ref{prop:conjugacy_operator}.
Under the drift--diffusion condition
\[
c<\frac{\widetilde b^2}{2\widetilde a}<1,
\]
the classical semigroup generated by $\widetilde A$ is chaotic; this follows from
Desch--Schappacher--Webb type criteria \cite{desch1997}.
Since
\[
\frac{\widetilde b^2}{2\widetilde a}
=
\frac{(b\delta)^2}{2(a\delta^2)}
=
\frac{b^2}{2a},
\]
our assumption $c<\frac{b^2}{2a}<1$ is exactly the required one, so $\{S(t)\}$ is chaotic.

\medskip
\emph{Step 3: Transfer to the conformable semigroup.}
By Proposition~\ref{prop:conjugacy_operator},
\[
\mathcal S_\delta(t)=U^{-1}S(t)U,\qquad t\ge0.
\]
Since $U$ is unitary, it preserves density of sets and maps periodic points to periodic points.
Therefore chaoticity of $\{S(t)\}$ implies chaoticity of $\mathcal S_\delta$ on $X^\delta$.

\medskip
Finally, the mild-solution correspondence is precisely the intertwining identity
$U(\mathcal S_\delta(t)f)=S(t)(Uf)$, which is equivalent to
$w(\cdot,t)=S(t)(Uf)$ for $w(\xi,t)=u(\xi^{1/\delta},t)$.
\end{proof}

\begin{remark}\label{rem:what_conformable_here}
In this example the conformable drift--diffusion operator becomes classical after the spatial clock
$\xi=x^\delta$.
The rescaling $(a,b)\mapsto(a\delta^2,b\delta)$ leaves the ratio $\frac{b^2}{2a}$ unchanged, which
explains why the same inequality controls chaoticity in both formulations.
\end{remark}

\subsection{A conjugacy principle: transporting chaos from $L^2(0,1)$ to $X^\delta$}\label{subsec:app2}

Theorem~\ref{thm:conformable_example412} rests on two structural facts:
(i) the spatial clock $\xi=x^\delta$ yields a unitary identification between $X^\delta$
and $L^2(0,1)$ (Lemma~\ref{lem:spatial_unitary}), and
(ii) the conformable generator $A_\delta$ is unitarily equivalent to the classical
drift--diffusion operator $\widetilde A$ (Proposition~\ref{prop:conjugacy_operator}).
Taken together, these statements imply that the two semigroups are intertwined by a
homeomorphism; consequently, orbit-based properties are invariant under this conjugacy.
We isolate this consequence in a form that can be reused in other conformable models.

\begin{theorem}[Chaoticity transfer under unitary spatial equivalence]\label{thm:app_diffusion_transport_2}
Fix $\delta\in(0,1]$ and assume the setting of Proposition~\ref{prop:conjugacy_operator}.
Let $U:X^\delta\to L^2(0,1)$ be the unitary operator defined in Lemma~\ref{lem:spatial_unitary},
and assume that
\begin{equation}\label{eq:app2_generator_equivalence}
\widetilde A = U A_\delta U^{-1}.
\end{equation}
Denote by $\{S(t)\}_{t\ge0}$ the $C_0$--semigroup generated by $\widetilde A$ on $L^2(0,1)$, and by
$\mathcal S_\delta=\{\mathcal S_\delta(t)\}_{t\ge0}$ the conformable semigroup generated by $A_\delta$
on $X^\delta$. Then the following hold.
\begin{enumerate}
\item \emph{(Semigroup conjugacy)} For every $t\ge0$,
\begin{equation}\label{eq:app2_Sdelta_conjugacy}
\mathcal S_\delta(t)=U^{-1}S(t)U,
\qquad\text{equivalently}\qquad
U\mathcal S_\delta(t)=S(t)U .
\end{equation}

\item \emph{(Orbit correspondence)} For each $f\in X^\delta$,
\[
U\bigl(\{\mathcal S_\delta(t)f:\ t\ge0\}\bigr)
=
\{S(t)(Uf):\ t\ge0\},
\]
and therefore
\[
\overline{\{\mathcal S_\delta(t)f:\ t\ge0\}}=X^\delta
\quad\Longleftrightarrow\quad
\overline{\{S(t)(Uf):\ t\ge0\}}=L^2(0,1).
\]

\item \emph{(Periodic points)} Let
\begin{align*}
\mathrm{Per}(\mathcal S_\delta)
&=
\{f\in X^\delta:\ \exists\,t_0>0\ \text{such that }\mathcal S_\delta(t_0)f=f\},\\
\mathrm{Per}(S)
&=
\{g\in L^2(0,1):\ \exists\,t_0>0\ \text{such that }S(t_0)g=g\}.
\end{align*}
Then $U(\mathrm{Per}(\mathcal S_\delta))=\mathrm{Per}(S)$ and hence
\[
\overline{\mathrm{Per}(\mathcal S_\delta)}=X^\delta
\quad\Longleftrightarrow\quad
\overline{\mathrm{Per}(S)}=L^2(0,1).
\]

\item \emph{(Chaoticity transfer)} If $\{S(t)\}$ is chaotic on $L^2(0,1)$
(for instance under the drift--diffusion regime used in
Theorem~\ref{thm:conformable_example412}, as ensured via criteria of
Desch--Schappacher--Webb type \cite{desch1997}),
then $\mathcal S_\delta$ is chaotic on $X^\delta$.
\end{enumerate}
\end{theorem}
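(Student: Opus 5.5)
The plan is to reduce everything to the generator relation \eqref{eq:app2_generator_equivalence} together with the fact that $U$ is a unitary operator, hence a linear homeomorphism of $X^\delta$ onto $L^2(0,1)$, by Lemma~\ref{lem:spatial_unitary}. Items (2)--(4) are then purely topological consequences of the conjugacy in item (1), so the only step with real content is (1).

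\textbf{Item (1).} Define $R(t):=U^{-1}S(t)U$ for $t\ge0$. Since $U$ is unitary, $R(0)=I$ and $R(t+s)=U^{-1}S(t)UU^{-1}S(s)U=R(t)R(s)$. Moreover $\|R(t)f-f\|_{X^\delta}=\|S(t)Uf-Uf\|_{L^2}\to0$, so $\{R(t)\}_{t\ge0}$ is a $C_0$--semigroup on $X^\delta$. Its generator $C$ is computed from difference quotients: $t^{-1}(R(t)f-f)=U^{-1}\,t^{-1}(S(t)Uf-Uf)$. Because $U^{-1}$ is bounded with bounded inverse, this limit exists iff $Uf\in D(\widetilde A)$, and then $Cf=U^{-1}\widetilde A Uf$. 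Hence $D(C)=U^{-1}D(\widetilde A)$ and $C=U^{-1}\widetilde A U=A_\delta$ by \eqref{eq:app2_generator_equivalence}. A $C_0$--semigroup is uniquely determined by its generator, and $\mathcal S_\delta$ is by hypothesis the semigroup generated by $A_\delta$, as in Proposition~\ref{prop:conjugacy_operator}. Therefore $\mathcal S_\delta(t)=R(t)$ for all $t\ge0$, which is \eqref{eq:app2_Sdelta_conjugacy}.

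\textbf{Main obstacle.} The delicate point is interpreting \eqref{eq:app2_generator_equivalence} as an identity of closed operators, including domains. Proposition~\ref{prop:conjugacy_operator} only verified it on a core. I will therefore read the hypothesis as full operator equality, $D(A_\delta)=U^{-1}D(\widetilde A)$. If only equality on a core were available, I would take closures instead: $U^{-1}\widetilde AU$ is closed, and a generator is the closure of its restriction to a core, so the two closures coincide. The uniqueness argument above then goes through unchanged.

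\textbf{Items (2)--(4).} For (2), apply $U$ to the orbit and use \eqref{eq:app2_Sdelta_conjugacy}: $U\mathcal S_\delta(t)f=S(t)Uf$, so $U$ maps the orbit of $f$ onto the orbit of $Uf$. A homeomorphism maps closures to closures and $U(X^\delta)=L^2(0,1)$, which gives the density equivalence. For (3), $\mathcal S_\delta(t_0)f=f$ holds iff $S(t_0)Uf=Uf$, so $U(\mathrm{Per}(\mathcal S_\delta))=\mathrm{Per}(S)$, and density transfers in the same way. For (4), chaos means having a dense orbit together with a dense set of periodic points, so (2) and (3) give the transfer directly. In the drift--diffusion regime, chaoticity of $\{S(t)\}$ is already established in Step 2 of the proof of Theorem~\ref{thm:conformable_example412}.
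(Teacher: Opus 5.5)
Your proposal is correct and follows essentially the same route as the paper: conjugate $S(t)$ by the unitary $U$, identify the generator of $U^{-1}S(\cdot)U$ with $A_\delta$ via \eqref{eq:app2_generator_equivalence}, use uniqueness of the $C_0$--semigroup with a given generator, and read off items (2)--(4) from the fact that $U$ is a linear homeomorphism. Your explicit computation $D(C)=U^{-1}D(\widetilde A)$ is slightly more careful than the paper, which evaluates the difference quotient only for $f\in D(A_\delta)$ and so by itself only shows that the generator extends $A_\delta$. One small correction to your core-only fallback: equality there comes from $A_\delta$ being itself a generator, since $\lambda-A_\delta$ is injective for large $\lambda$ and so cannot properly extend the bijection $\lambda-U^{-1}\widetilde AU$; it does not come from ``the two closures coincide,'' because the core need not be a core for $A_\delta$.
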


\begin{proof}
Define $\widehat S(t):=U^{-1}S(t)U$ on $X^\delta$.
Then $\{\widehat S(t)\}$ is a $C_0$--semigroup on $X^\delta$ and, for $f\in D(A_\delta)$,
\[
\lim_{t\downarrow0}\frac{\widehat S(t)f-f}{t}
=
U^{-1}\widetilde A(Uf)
=
A_\delta f,
\]
using \eqref{eq:app2_generator_equivalence}.
Hence $\{\widehat S(t)\}$ is generated by $A_\delta$.
By uniqueness of $C_0$--semigroups with a given generator, we obtain
$\widehat S(t)=\mathcal S_\delta(t)$, proving \eqref{eq:app2_Sdelta_conjugacy}.

The orbit and periodic-point correspondences follow by applying $U$ to the identities
$\mathcal S_\delta(t)=U^{-1}S(t)U$ and $U\mathcal S_\delta(t)=S(t)U$.
Since $U$ is unitary, it preserves closures and denseness, yielding items (2)--(3).
Finally, chaoticity is the conjunction of hypercyclicity and density of periodic points,
hence item (4) follows immediately.
\end{proof}

\begin{remark}
Theorem~\ref{thm:app_diffusion_transport_2} provides a reusable template:
whenever a conformable generator can be reduced to a classical one through a unitary
(or more generally topological) conjugacy, orbit-type dynamical properties are
automatically transported to the conformable semigroup.
\end{remark}

\subsection{A conformable transport model on weighted spaces: reduction to translations and dynamical consequences}
\label{subsec:app_conformable_transport}

In this subsection we show that the conformable evolution problem
\begin{equation}\label{eq:app_conf_transport_pde}
\begin{cases}
\partial_t u(t,x)=\partial_x^\alpha u(t,x), & t,x\in\R_+,\\
u(0,x)=f(x),
\end{cases}
\end{equation}
can be reduced explicitly to the classical translation semigroup by a nonlinear
change of the space variable.
Consequently, orbit-based dynamical properties (hypercyclicity and Devaney chaos)
for the solution semigroup are inherited from the well-understood dynamics of
translations on suitable weighted spaces.

\medskip
\noindent\textbf{Functional setting.}
Let $\alpha\in(0,1]$ and let $\rho_\alpha:\R_+\to(0,\infty)$ be a weight.
We work on the Banach space
\[
\widetilde X \in \Bigl\{\, C_{0,\rho_\alpha}(\R_+;\C),\; L^p_{\rho_\alpha}(\R_+;\C)\ (1\le p<\infty)\Bigr\},
\]
equipped with the standard weighted norms.
Throughout this subsection, $\partial_x^\alpha$ denotes the conformable derivative with respect
to $x$ in the sense recalled in Section~\ref{sec2}. In particular, for $v\in C^1((0,\infty))$,
\begin{equation}\label{eq:app_conf_repr_x}
\partial_x^\alpha v(x)=x^{1-\alpha}v'(x),\qquad x>0.
\end{equation}

\subsubsection*{A nonlinear space clock}

Define the \emph{conformable spatial clock}
\begin{equation}\label{eq:app_space_clock}
\psi:\R_+\to\R_+,\qquad \psi(x):=\frac{x^\alpha}{\alpha}.
\end{equation}
Then $\psi$ is a $C^1$--diffeomorphism of $\R_+$ onto itself with inverse
\[
\psi^{-1}(\xi)=(\alpha\xi)^{1/\alpha},\qquad \xi\ge0.
\]
Given a sufficiently regular solution $u(t,x)$ of \eqref{eq:app_conf_transport_pde}, define
\begin{equation}\label{eq:app_w_def}
w(t,\xi):=u\bigl(t,\psi^{-1}(\xi)\bigr),\qquad t,\xi\ge0.
\end{equation}
A direct chain-rule computation using \eqref{eq:app_conf_repr_x} yields
\[
\partial_\xi w(t,\xi)=\partial_x^\alpha u\bigl(t,\psi^{-1}(\xi)\bigr),
\]
so the equation $\partial_t u=\partial_x^\alpha u$ is equivalent to the classical
transport equation
\begin{equation}\label{eq:app_translation_pde}
\partial_t w(t,\xi)=\partial_\xi w(t,\xi),\qquad t,\xi\ge0,
\end{equation}
with $w(0,\xi)=(Qf)(\xi)$, where $Q$ is the composition operator introduced below.
Equation \eqref{eq:app_translation_pde} is solved by translations:
\begin{equation}\label{eq:app_translation_solution}
w(t,\xi)=w(0,\xi+t),\qquad t,\xi\ge0.
\end{equation}

\subsubsection*{Conjugacy with the translation semigroup}

Let $\widetilde\rho_\alpha$ be the transported weight defined by
\begin{equation}\label{eq:app_weight_transport}
\widetilde\rho_\alpha(\xi):=\rho_\alpha\bigl(\psi^{-1}(\xi)\bigr),\qquad \xi\ge0.
\end{equation}
Consider the linear map
\begin{equation}\label{eq:app_Q_def}
Q:\widetilde X\to \widetilde Y,\qquad (Qf)(\xi):=f\bigl(\psi^{-1}(\xi)\bigr),
\end{equation}
where $\widetilde Y$ denotes the corresponding weighted space in the $\xi$--variable:
\[
\widetilde Y \in \Bigl\{\, C_{0,\widetilde\rho_\alpha}(\R_+;\C),\;
L^p_{\widetilde\rho_\alpha}(\R_+;\C)\ (1\le p<\infty)\Bigr\}.
\]
Let $\{W(t)\}_{t\ge0}$ be the classical translation semigroup on $\widetilde Y$,
\begin{equation}\label{eq:app_W_def}
(W(t)g)(\xi):=g(\xi+t),\qquad \xi,t\ge0,
\end{equation}
and define $\{S_\alpha(t)\}_{t\ge0}$ on $\widetilde X$ by
\begin{equation}\label{eq:app_Salpha_def}
(S_\alpha(t)f)(x):=f\bigl(\psi^{-1}(\psi(x)+t)\bigr),\qquad x,t\ge0.
\end{equation}
Then $u(t,\cdot)=S_\alpha(t)f$ is the (mild) solution of \eqref{eq:app_conf_transport_pde}.

\begin{proposition}[Explicit solution semigroup and conjugacy]\label{prop:app_conjugacy_translation}
Let $\alpha\in(0,1]$ and let $\widetilde X$ be as above.
Then $\{S_\alpha(t)\}_{t\ge0}$ defines a strongly continuous semigroup on $\widetilde X$ and satisfies
\begin{equation}\label{eq:app_conjugacy_identity}
Q\,S_\alpha(t)=W(t)\,Q,\qquad t\ge0,
\end{equation}
equivalently,
\begin{equation}\label{eq:app_conjugacy_equiv}
S_\alpha(t)=Q^{-1}W(t)Q,\qquad t\ge0.
\end{equation}
In particular, for each $f\in\widetilde X$,
\[
Q\bigl(\{S_\alpha(t)f:\ t\ge0\}\bigr)=\{W(t)(Qf):\ t\ge0\}.
\]
\end{proposition}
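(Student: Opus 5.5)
The plan is to first establish that $Q:\widetilde X\to\widetilde Y$ is a bijective bounded linear map with bounded inverse. Then I will verify the intertwining identity \eqref{eq:app_conjugacy_identity} pointwise. Finally, I will deduce the semigroup and strong continuity properties of $S_\alpha$ from those of the translation semigroup $W$ by conjugation.

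\textbf{Step 1: $Q$ is an isomorphism.} In the $C_{0,\rho}$ case, I read the norm as $\sup_x |f(x)|\rho_\alpha(x)$. Then $\sup_\xi |f(\psi^{-1}(\xi))|\,\widetilde\rho_\alpha(\xi)$ is the same supremum, because $\psi$ is a bijection of $\R_+$ fixing $0$. The vanishing-at-infinity and continuity conditions are also transported, since $\psi$ is a homeomorphism with $\psi(x)\to\infty$ as $x\to\infty$. So $Q$ is an isometry there.

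In the $L^p$ case the change of variables $\xi=\psi(x)$, $d\xi=x^{\alpha-1}dx$, gives
\[
\int_0^\infty |f(\psi^{-1}(\xi))|^p\widetilde\rho_\alpha(\xi)\,d\xi=\int_0^\infty |f(x)|^p\rho_\alpha(x)\,x^{\alpha-1}\,dx .
\]
This is the main obstacle: the Jacobian $x^{\alpha-1}$ does not cancel. I would resolve it by interpreting $\widetilde Y$ in the $L^p$ case as $L^p$ with respect to the transported measure, that is, with weight $\widetilde\rho_\alpha(\xi)\,(\psi^{-1})'(\xi)$, exactly as in Proposition~\ref{prop:isometryLp}, where the conformable measure absorbs the Jacobian. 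Equivalently, $\widetilde Y:=Q(\widetilde X)$ carries the pushed-forward norm. With this convention $Q$ is an isometric isomorphism, and its inverse is $(Q^{-1}g)(x)=g(\psi(x))$.

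\textbf{Step 2: conjugacy.} For $f\in\widetilde X$ and $t,\xi\ge0$,
\[
(QS_\alpha(t)f)(\xi)=(S_\alpha(t)f)(\psi^{-1}(\xi))=f\bigl(\psi^{-1}(\xi+t)\bigr)=(Qf)(\xi+t)=(W(t)Qf)(\xi).
\]
This gives \eqref{eq:app_conjugacy_identity}, and composing with $Q^{-1}$ yields \eqref{eq:app_conjugacy_equiv}. In particular each $S_\alpha(t)=Q^{-1}W(t)Q$ is bounded on $\widetilde X$.

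\textbf{Step 3: semigroup properties and orbits.} For the semigroup properties I use the conjugacy:
\[
S_\alpha(0)=Q^{-1}Q=I,\qquad S_\alpha(t+s)=Q^{-1}W(t)W(s)Q=S_\alpha(t)S_\alpha(s).
\]
For strong continuity,
\[
\|S_\alpha(t)f-f\|\le\|Q^{-1}\|\,\|W(t)Qf-Qf\|\to0 .
\]
This requires $W$ to be a $C_0$-semigroup on $\widetilde Y$, which is a hypothesis on the weight. The standard admissibility condition is that there exist $M\ge1$ and $\omega\in\R$ with $\widetilde\rho(\xi)\le Me^{\omega t}\widetilde\rho(\xi+t)$, uniformly for $t$ bounded. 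I will state this as the standing assumption on the transported weight, as in Desch--Schappacher--Webb, under which strong continuity of translations is classical. The orbit identity then follows by applying $Q$ to the set $\{S_\alpha(t)f\}$ and using \eqref{eq:app_conjugacy_identity}.
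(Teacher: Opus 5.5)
Your argument is the paper's argument: the same pointwise computation $(QS_\alpha(t)f)(\xi)=f(\psi^{-1}(\xi+t))=(W(t)Qf)(\xi)$, with strong continuity transferred from $W$ through $Q$ and $Q^{-1}$. Your Step 1 is more careful than the paper, which only asserts that $Q$ and $Q^{-1}$ are bounded; with $\widetilde\rho_\alpha=\rho_\alpha\circ\psi^{-1}$ as literally defined, the uncancelled Jacobian $x^{\alpha-1}$ makes $Q$ and $Q^{-1}$ unbounded on the $L^p$ spaces when $\alpha<1$, so putting $(\psi^{-1})'$ into the transported weight and assuming admissibility explicitly (so that $W$ is a $C_0$--semigroup on $\widetilde Y$) are exactly the hypotheses the paper's one-line justification silently needs.
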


\begin{proof} Let $f\in\widetilde X$ and $\xi\ge0$. Using \eqref{eq:app_Salpha_def} and \eqref{eq:app_Q_def},
\begin{align*}
(QS_\alpha(t)f)(\xi)&=(S_\alpha(t)f)\bigl(\psi^{-1}(\xi)\bigr)\\
&=f\Bigl(\psi^{-1}\bigl(\psi(\psi^{-1}(\xi))+t\bigr)\Bigr)\\
&=f\bigl(\psi^{-1}(\xi+t)\bigr)\\
&=(Qf)(\xi+t)\\
&=(W(t)Qf)(\xi),
\end{align*}

which gives \eqref{eq:app_conjugacy_identity} and hence \eqref{eq:app_conjugacy_equiv}. Strong continuity follows from that of $\{W(t)\}$ and the boundedness of $Q,Q^{-1}$.
\end{proof}

\begin{corollary}[Invariance of orbit-based dynamics]\label{cor:app_dynamics_invariance}
With the above notation, the following are equivalent:
\begin{enumerate}
\item $\{S_\alpha(t)\}_{t\ge0}$ is hypercyclic (resp.\ chaotic) on $\widetilde X$;
\item $\{W(t)\}_{t\ge0}$ is hypercyclic (resp.\ chaotic) on $\widetilde Y$.
\end{enumerate}
Moreover, $f\in\widetilde X$ is hypercyclic for $\{S_\alpha(t)\}$ if and only if
$Qf$ is hypercyclic for $\{W(t)\}$, and the same correspondence holds for periodic points.
\end{corollary}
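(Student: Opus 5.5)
The plan is to derive everything from the conjugacy identity $S_\alpha(t)=Q^{-1}W(t)Q$ of Proposition~\ref{prop:app_conjugacy_translation}, together with the fact that $Q:\widetilde X\to\widetilde Y$ is a linear homeomorphism. The first step is to check that $Q$ is an isometric isomorphism, with inverse $(Q^{-1}g)(x)=g(\psi(x))$.

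\emph{The sup-norm case.} Here $\|Qf\|_{\widetilde\rho_\alpha,\infty}=\sup_\xi |f(\psi^{-1}(\xi))|\widetilde\rho_\alpha(\xi)=\|f\|_{\rho_\alpha,\infty}$ by \eqref{eq:app_weight_transport}, since $\psi$ is a bijection of $\R_+$. Membership in the space is preserved because $\psi$ is a homeomorphism with $\psi(x)\to\infty$ as $x\to\infty$, so continuity and the vanishing condition at infinity carry over.

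\emph{The $L^p$ case.} This is the one point where care is needed, since the change of variables $\xi=\psi(x)$ produces a Jacobian $d\xi=x^{\alpha-1}dx$. I expect this to be the main obstacle. I would handle it by interpreting the transported space $\widetilde Y$ as the weighted space that makes $Q$ isometric: its weight is $\widetilde\rho_\alpha$ multiplied by the Jacobian factor $(\psi^{-1})'(\xi)$, exactly as in Proposition~\ref{prop:isometryLp}. Alternatively, one can simply note that $Q$ is bounded with bounded inverse whenever $\widetilde Y$ is defined as the image space $Q(\widetilde X)$ with the transported norm. Either way, the only thing needed below is that $Q$ is a linear bijection which is continuous in both directions.

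\emph{Orbits and hypercyclicity.} For $f\in\widetilde X$, Proposition~\ref{prop:app_conjugacy_translation} gives $Q(\{S_\alpha(t)f:t\ge0\})=\{W(t)Qf:t\ge0\}$. Since a homeomorphism maps dense sets onto dense sets, and $Q^{-1}$ does the same in the other direction, the orbit of $f$ is dense in $\widetilde X$ if and only if the orbit of $Qf$ is dense in $\widetilde Y$. Thus $f$ is hypercyclic for $\{S_\alpha(t)\}$ exactly when $Qf$ is hypercyclic for $\{W(t)\}$. Because $Q$ is onto, $\{S_\alpha(t)\}$ has a hypercyclic vector if and only if $\{W(t)\}$ does.

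\emph{Periodic points.} If $S_\alpha(t_0)f=f$ with $t_0>0$, then $W(t_0)Qf=QS_\alpha(t_0)f=Qf$. Conversely, applying $Q^{-1}$ to $W(t_0)g=g$ gives $S_\alpha(t_0)Q^{-1}g=Q^{-1}g$. Hence $Q(\mathrm{Per}(S_\alpha))=\mathrm{Per}(W)$.

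\emph{Chaos.} Using once more that $Q$ preserves density, $\mathrm{Per}(S_\alpha)$ is dense in $\widetilde X$ if and only if $\mathrm{Per}(W)$ is dense in $\widetilde Y$. Combining this with the hypercyclicity equivalence gives the equivalence of Devaney chaos, since chaos is the conjunction of hypercyclicity and density of periodic points. This completes the corollary; the argument mirrors Theorem~\ref{thm:app_diffusion_transport_2}, with unitarity replaced by topological equivalence.
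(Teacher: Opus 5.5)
Your argument is the paper's: $Q$ is a linear homeomorphism intertwining $S_\alpha(t)$ and $W(t)$, so it maps orbits onto orbits and periodic points onto periodic points, and it preserves density in both directions. Your caution in the $L^p$ case goes beyond the paper's one-line proof, which simply asserts that $Q$ is a homeomorphism: with $\widetilde\rho_\alpha=\rho_\alpha\circ\psi^{-1}$ taken literally one gets $\|Qf\|^p_{L^p_{\widetilde\rho_\alpha}}=\int_0^\infty|f(x)|^p\rho_\alpha(x)\,x^{\alpha-1}\,dx$, which for $\alpha<1$ is not equivalent to $\|f\|^p_{L^p_{\rho_\alpha}}$, so putting the Jacobian $(\psi^{-1})'$ into the weight of $\widetilde Y$ is needed for the equivalence to hold in general, not just a convenient normalization.
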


\begin{proof}
The map $Q$ is a homeomorphism and \eqref{eq:app_conjugacy_identity} sends orbits of $\{S_\alpha(t)\}$
onto orbits of $\{W(t)\}$.
Therefore density of orbits and density of periodic points are preserved under $Q$.
\end{proof}

\begin{remark}
Corollary~\ref{cor:app_dynamics_invariance} reduces orbit-based dynamics of
\eqref{eq:app_conf_transport_pde} to the translation semigroup on the transported space.
Consequently, criteria for translations on weighted spaces (expressed in terms of the
asymptotic behavior of $\widetilde\rho_\alpha$) can be transported back to the original weight
$\rho_\alpha$ via \eqref{eq:app_weight_transport}.
\end{remark}

\section*{Conclusion}

This work shows that conformable time evolution can be understood entirely within the classical $C_0$--semigroup framework. The key point is that, for a fixed order $\delta\in(0,1]$, the conformable derivative is local and reduces on smooth functions to a weighted classical derivative. This locality leads to a rigid structural mechanism: every $C_0$--$\delta$--semigroup $\mathcal S_\delta$ is exactly a classical $C_0$--semigroup $\mathcal T$ observed through the nonlinear time change $\Psi(t)=t^\delta/\delta$, namely
\[
\mathcal S_\delta(t)=\mathcal T(\Psi(t)).
\]
The correspondence is not merely formal. We proved that the $\delta$--generator of $\mathcal S_\delta$ coincides with the generator of $\mathcal T$ on the same domain, and that conformable mild solutions are in one-to-one correspondence with classical mild solutions under $s=\Psi(t)$. In addition, the functional setting induced by the conformable integral, based on the measure $t^{\delta-1}\,dt$, does not introduce a genuinely new scale: conformable Lebesgue and Sobolev spaces are explicitly (and in the relevant formulations, isometrically) equivalent to their classical counterparts via the same change of variables. This identification allows one to transport estimates and well-posedness results directly, including generation statements in Hilbert spaces through the Lumer--Phillips theorem.

A central dynamical consequence is that conformable reparametrization does not create new trajectories: it only changes the speed at which orbits are traversed. Hence orbit-based properties are invariant. We established that $\delta$--hypercyclicity and $\delta$--chaos for $\mathcal S_\delta$ coincide with the classical notions for $\mathcal T$, and we derived a conformable Desch--Schappacher--Webb criterion by transporting the classical theorem through the clock relation. The applications developed in the final part of the paper further emphasize this transfer principle: after a suitable nonlinear change of the spatial variable, the conformable generators become unitarily equivalent to classical drift--diffusion or translation generators, so chaoticity follows immediately from known results without new spectral computations.

Overall, the results clarify the conceptual status of conformable models in semigroup theory. Unlike nonlocal fractional dynamics, where memory effects are intrinsic, conformable evolution remains local and its ``fractional'' aspect is fully encoded by deterministic changes of variables. From a practical viewpoint, this provides a systematic methodology: whenever a conformable problem can be reduced to a classical one by an explicit time or space clock, qualitative properties and dynamical criteria can be imported directly. Natural continuations include extending the conjugacy principle to broader classes of weights and boundary conditions, studying robustness under perturbations, and identifying settings where additional modeling features (e.g.\ genuinely nonlocal terms) are required to produce dynamics that go beyond a pure reparametrization effect.

\end{document}